\documentclass[12pt]{amsart}

\usepackage[utf8]{inputenc}

\usepackage{tikz-cd}

\usepackage{amsmath}
\usepackage{amssymb}
\usepackage{amscd}
\usepackage{amsthm}
\usepackage{textcomp}
\usepackage{xcolor}
\usepackage{cancel}

\usepackage{accents}

\usepackage{latexsym}
\usepackage{mathtools}
\usepackage{stmaryrd}

\addtolength{\oddsidemargin}{-2cm}
\addtolength{\evensidemargin}{-2cm}
\addtolength{\headheight}{5pt}
\addtolength{\headsep}{.5cm}
\addtolength{\textheight}{-.7cm}
\addtolength{\textwidth}{4cm}
\addtolength{\footskip}{.5cm}
\parskip1ex

\def\tr{\operatorname{Tr}}

\def\id{\operatorname{id}}

\def\Rz{\mathbb R}
\def\R{\mathbb R}
\def\Rz{\mathbb R}

\def\Cc{\mathcal C}

\def\Lc{\mathcal L}

\def\Xf{\mathfrak X}

 % ~Parenthesize~
 % f\of z -> f(z)

\def\R{{\mathbb R}}

\def\0{{\mathbf 0}}
\def\1{{\mathbf 1}}

\def\Ric{{\operatorname{Ric}}}

\def\Sym{{\operatorname{Sym}}}

\def\GL{{\operatorname{GL}}}

\def\de{\delta}

\def\th{\theta}

\def\ps{\psi}

\def\Ga{\Gamma}
\def\De{\Delta}

\def\ad{\operatorname{ad}}

\def\aequiv#1{\;\raise-4pt\hbox{$\substack{{\displaystyle\sim}\\\raise4pt\hbox{$\scriptstyle #1$}}$}
\;}

\DeclareMathAccent{\wtilde}{\mathord}{largesymbols}{"65}

\def\ric#1{\operatorname{Ric}^{#1}}
\makeatletter
\def\cn#1{\def\tmp@cn{#1}\ifx\tmp@cn\empty\nabla\else\nabla^{\tmp@cn}\fi}
\makeatother

\def\scal#1{\operatorname{Scal}^{#1}}
\def\lie{\mathcal{L}}

\def\gf{\mathfrak g}

\newtheorem{theorem}{Theorem}[section]
\newtheorem{lem}[theorem]{Lemma}
\newtheorem{pr}[theorem]{Proposition}
\newtheorem{co}[theorem]{Corollary}
\theoremstyle{definition}
\newtheorem{defi}[theorem]{Definition}
\newtheorem{ex}[theorem]{Example}
\newtheorem{re}[theorem]{Remark}
%\newtheorem*{nota}[theorem]{Notation}
%\newtheorem*{rmk}}[theorem]{Remarks}

%%%%%%%%%%%%%%%%%%%%%%%%%%%%%%%

\newcommand{\mg}{\mathfrak g }

\newcommand{\mm}{\mathfrak m }

\newcommand{\mz}{\mathfrak z }

\newcommand{\mh}{\mathfrak h }

\newcommand{\lela}{ g(}
\newcommand{\rira}{)}

\renewcommand{\R}{\mathbb R}

\newcommand{\nc}{\newcommand}
\nc{\Iso}{\operatorname{Iso}}
 \nc{\iso}{\mathfrak{iso}}
 \nc{\sso}{\mathfrak{so}}
\nc{\Ad}{\operatorname{Ad}} 
%  \nc{\pr}{\operatorname{pr}} 
 \nc{\Dera}{\operatorname{Dera}} \nc{\Auto}{\operatorname{Auto}}
 \nc{\LL}{{\rm L}}
\nc{\dd}{{\rm d}}
\nc{\Id}{{\rm Id}}

%%%%%%%%%%%%%%%%%%%%%%%%%%%%%%%

\begin{document}

\title{Weyl-Einstein structures on conformal  solvmanifolds}

\author{Viviana del Barco}
\address{V.~del Barco: Instituto de Matemática, Estatística e Computação Científica, Universidade Estadual de Campinas,  Rua Sergio Buarque de Holanda, 651, Cidade Universitaria Zeferino Vaz, 13083-859, Campinas, São Paulo, Brazil.}

\thanks{V.~del Barco is partially supported by  FAEPEX-UNICAMP grant 2566/21 and FAPESP grant 2021/09197-8; V.~del Barco and A.~Moroianu are supported by MATHAMSUD Regional Program 21-MATH-06.}

\email{delbarc@unicamp.br}

\author{Andrei Moroianu}
\address{A.~Moroianu: Université Paris-Saclay, CNRS,  Laboratoire de mathématiques d'Orsay, 91405, Orsay, France}
\email{andrei.moroianu@math.cnrs.fr}

\author{Arthur Schichl}
\address{A.~Schichl: École polytechnique, 91120, Palaiseau, France, and ETH Zürich, 8006, Zürich, Switzerland}
\email{aschichl@student.ethz.ch}

\date{\today}

\begin{abstract} A conformal Lie group is a conformal manifold $(M,c)$ such that $M$ has a Lie group structure and $c$ is the conformal structure defined by a left-invariant metric $g$ on $M$. 
We study Weyl-Einstein structures on conformal solvable Lie groups and on their compact quotients. In the compact case, we show that every conformal solvmanifold carrying a Weyl-Einstein structure is Einstein. We also show that there are no left-invariant Weyl-Einstein structures on non-abelian nilpotent conformal Lie groups, and classify them on conformal solvable Lie groups in the almost abelian case. Furthermore, we determine the precise list (up to automorphisms) of left-invariant metrics on simply connected solvable Lie groups of dimension 3 carrying left-invariant Weyl-Einstein structures.
\end{abstract}

\subjclass[2010]{22E25, 53C30, 53C25} 
\keywords{Weyl-Einstein structures, conformal manifolds, Riemannian Lie groups, nilmanifolds, solvmanifolds} 

\maketitle

\section{Introduction}

In $1964$, Wolf~\cite{Wo64} showed that there are no left-invariant Einstein metrics on non-abelian nilpotent Lie groups (see also \cite{Mil76}). Our aim is to study a similar question in the conformal setting. 

It was Hermann Weyl \cite{We23,Tr16} who first introduced a generalization of Einstein metrics in conformal geometry. A Weyl-Einstein structure on a conformal manifold $(M,c)$ is a torsion-free linear connexion preserving the conformal structure, whose symmetric trace-free component of the Ricci tensor vanishes. Unlike the Riemannian case, it is still unknown whether there exists a non-abelian conformal nilpotent Lie group $(M,c)$, where $c$ contains a left-invariant metric, carrying a Weyl-Einstein structure. 

The notion of nilmanifolds as manifolds endowed with a transitive action of a nilpotent Lie group was introduced by Mal'cev~\cite{Mal49} in $1949$. Mal'cev has proved subsequently that every compact nilmanifold $M$ can be obtained by taking the quotient of a nilpotent Lie group $G$ by some discrete subgroup $\Gamma\subset G$, that is, $M=\Gamma\backslash G$. Every left-invariant Riemannian metric $g$ on $G$ defines a unique Riemannian metric $\bar g$ on $M$ such that the projection $G\to M$ is a local isometry. 
A compact nilmanifold $M$ endowed with a conformal structure $c$ that contains such a metric $\bar g$ is called a compact conformal nilmanifold. 

More generally, if a compact manifold $M$ is the quotient of a simply connected solvable Lie group $G$ by a discrete co-compact subgroup $\Gamma\subset G$, and $c$ is the conformal structure defined by the projection to $M$ of a left-invariant Riemannian metric on $G$, we say that $(M,c)$ is a compact conformal solvmanifold. 

Our first result (Theorem \ref{th:WEiffE} and Corollary \ref{cor:42}) is that a compact conformal solvmanifold $(M,c)$ does not admit any Weyl-Einstein structure, unless its universal cover  endowed with the corresponding left-invariant metric $g$ is flat. The detailed proof can be found in Section~\ref{ch:solv}, which is preceded by the preliminary Section~\ref{ch:prelim} listing some general results in conformal geometry and by Section~\ref{s:conform} where we give a quick introduction to conformal geometry on Riemannian Lie groups, with special emphasis on the notion of Weyl--Einstein structures. 

In the remaining part of the article, we drop the compactness assumption, but impose the left-invariance of the Weyl structure instead. More precisely, we study Weyl-Einstein structures on Riemannian Lie groups $(G,g)$ whose Lee form with respect to $g$ is left-invariant. This problem can be stated in purely algebraic terms on the Lie algebra of $G$, but is intractable in full generality. When $G$ is nilpotent, we show that no such structure exists, unless $G$ is abelian (Proposition \ref{pro:nilWE}). The situation is more complicated in the solvable case, where even left-invariant Einstein metrics do actually exist. 

In Section \ref{ch:aa} we obtain the classification of left-invariant Weyl-Einstein structures on almost abelian conformal Lie groups  (Theorem \ref{pro:aa}) and in Section \ref{ch:3d} we study the 3-dimensional solvable case  (Theorem \ref{th:cl3}).

\section{Preliminaries on Riemannian and conformal geometry}\label{ch:prelim}

This preliminary section aims to set notations and provide the basic facts that will be of use for our main results. We include a brief introduction to conformal geometry and classical formulas in Riemannian and conformal geometry. For further information on these topics, we refer the reader to  Besse~\cite{Besse2008}, Bourguignon et al.~\cite{BHMMM}, Gauduchon~\cite{Ga95} and Moroianu~\cite{Mor07}.

Let $(M,g)$ be a Riemannian manifold of dimension $n$. We assume throughout the paper that $n\geq 3$. Let $\nabla^g$  denote  the Levi-Civita connection of $g$, whose curvature tensor  $R^g$  is defined by 
\begin{equation}\label{eq:riemc}
R^g(X,Y)Z:=\nabla^g_{[X,Y]}Z-[\nabla^g_X,\nabla^g_Y]Z,\qquad \mbox{ for any }
X,Y,Z \in \Xf(M).
\end{equation}
The Riemannian Ricci tensor $\ric{\nabla^g}$ is the symmetric tensor given by
\begin{equation}
\label{eq:riccg}
\ric{\nabla^g}(X,Y)={\rm Tr}(Z\mapsto R^g(X,Z)Y),\qquad \mbox{ for any }
X,Y \in \Xf(M).
\end{equation}The scalar curvature of $(M,g)$ is the trace of the Ricci tensor, that is, $\scal{g}={\rm Tr}(\ric{\nabla^g})$.

\begin{defi}\label{defi:E}
The Riemannian manifold $(M,g)$ is called Einstein if $\ric{\nabla^g}=\frac1n \scal{g} g$. Equivalently, $(M,g)$ is Einstein if the trace-free part of $\ric{\nabla^g}$ vanishes. 
\end{defi}
It is well known that any Einstein manifold $(M,g)$ of dimension $n\ge 3$ has constant scalar curvature \cite{Besse2008}.

Given a local $g$-orthonormal frame  $\{ e_i\}_{i=1}^n$ of $M$,  the co-differential of a symmetric $k$-tensor field  $\alpha$ on $M$ is the symmetric 
$(k-1)$-tensor defined as
\begin{equation}
\label{eq:codif}
\delta^g\alpha=-\sum_{i=1}^n e_i\lrcorner \nabla^g_{e_i}\alpha.
\end{equation}
The well known contracted Bianchi identity \cite[\S 1.95]{Besse2008} reads:
\begin{equation}\label{eq:bianccontr}
    %\tag{BC}
   {\rm d} \scal{g} = -2\de^g\ric{\cn{g}}.
\end{equation}

Let $\De^g$ denote the Hodge--Laplace operator. On $1$-forms, the Laplacian $\De^g$ is related to the Ricci tensor $\ric{\nabla^g}$ via the Bochner--Weitzenb\"ock formula \cite[\S 3, Ch. 7]{Pet06}:
\begin{equation}\label{eq:WF}
\De^g\th =(\cn{g})^*\cn{g}\th + \ric{\cn{g}}(\th),\qquad\mbox{ for any } \theta\in\Omega^1(M),
\end{equation}
where,
in a local $g$-orthonormal frame $\{e_i\}_{i=1}^n$ of $M$, 
\[
(\cn{g})^*\cn{g}\th : = -\sum_{i=1}^n \cn{g}_{e_i} \cn{g}_{e_i} \th.
\]

For any $k \in \Rz$, we denote $\Lc^k$ the weight bundle of weight $k$ over $M$. Recall that $\Lc^k \coloneqq \GL(M)\times_{|\det|^{\frac{k}{n}}}\Rz$ is an oriented (thus trivializable) real line bundle and that for all $k,l \in \Rz$, $\Lc^k \otimes \Lc^l \simeq \Lc^{k+l}$. Denote the bundle of all symmetric $2$-tensors over $M$ by $\Sym^2(T^*M)$ and for all $k\in\Rz$, the set of all positive elements of $\Lc^k$ by $\Lc_+^k$.

\begin{defi}
A conformal class on $M$ is a section $c$ of the bundle $\Sym^2(T^*M) \otimes \Lc^2$ over $M$, which satisfies $c(X,X) \in \Lc^2_+$ for every non-vanishing vector $X \in TM$. The pair $(M,c)$ is called a conformal manifold.
\end{defi}

Given a conformal class $c$, for every section $l\in\Ga(\Lc_+)$ there exists a unique Riemannian metric $g$ on $M$ such that $c = g \otimes l^2$; all such metrics $g$ are said to belong to the conformal class $c$. On a conformal manifold $(M,c)$, the analogous of the Levi-Civita connection is the class of so-called Weyl-structures, which we shall introduce next.

\begin{defi} A {\em Weyl structure} on a conformal manifold $(M,c)$ is a torsion-free linear connection $\nabla$ on $M$ satisfying $\cn{}c = 0$.
\end{defi}

The fundamental theorem of conformal geometry \cite{We23} states that on any conformal manifold $(M,c)$ there is a one-to-one correspondence between Weyl structures $\nabla$ and connections $\cn{\Lc}$ on $\mathcal L$. This correspondence 
is implicitly given by the conformal analogue of the Koszul formula
\begin{equation}\label{eq:fundconfgeo}
\begin{aligned}
c(\cn{}_X Y, Z) &= \frac{1}{2}(\cn{\Lc}_X(c(Y,Z)) + \cn{\Lc}_Y(c(X,Z)) - \cn{\Lc}_Z(c(X,Y)) \\ &+ c(Z,[X,Y]) - c(Y,[X,Z]) - c(X,[Y,Z])),
\end{aligned}
\end{equation}
for all $X,Y,Z \in \Xf(M)$.

Let $\cn{\Lc}$ be a connection on $\Lc$ and $g$ a metric in $c$. Then there exists a unique section $l_g \in \Ga(\Lc^+)$ such that $c = g \otimes l_g^2$ and a $1$-form $\th_g \in \Ga(T^*M)$, called the Lee form of $\cn{}$ with respect to $g$, satisfying
\begin{equation}\label{eq:thdef}
\cn{\Lc}_X l_g = \th_g(X) l_g,\qquad \mbox{ for any }X \in \Xf(M).
\end{equation}

Notice that $\th_g$ depends on the choice of $g\in c$. However, any two Lee forms of a Weyl connection (that is, corresponding to two Riemannian metrics in $c$) are cohomologous. Indeed, if $\tilde g$ is another Riemannian metric  in the class $c$, then $g=e^{-2f} \tilde g$ for some $f\in C^\infty(M)$. We can thus write $c=g \otimes l_g^2=\tilde g\otimes l_{\tilde g}^2$, so $l_g=e^{f}l_{\tilde g}$ and thus \eqref{eq:thdef} implies
\begin{equation}
	\th_g = \th_{\tilde g} + {\rm d}f.
\end{equation}
In particular, the $2$-form $F\coloneqq {\rm d}\th_g={\rm d} \th_{\tilde g}$ is independent of the choice of $g\in c$, and is called the {\em Faraday form} of $\cn{}$.  The Weyl structure $\cn{}$ is called closed when $F=0$ and exact if $\th_g$ is exact for some (and thus all) $g\in c$. When it is understood from the context, and in order to avoid heavy notation, we will denote simply by $\th$ the Lee form $\th_g$ of a metric $g\in c$.

Given a Weyl structure $\nabla$ on $(M,c)$ and a Riemannian metric $g\in c$ with Lee form $\th$, it  follows directly from  \eqref{eq:thdef} that
\begin{equation}\label{eq:Dg}
    \cn{}g = -2\th\otimes g.
\end{equation}

Notice that if $\nabla$ is exact, then there exists a Riemannian metric $g$ in $c$ whose Lee form vanishes, and thus $\nabla=\nabla^g$.

Fixing a Riemannian metric $g$ in the conformal class $c$, the Weyl connection is completely determined by the Levi-Civita connection $\nabla^g$ and the Lee form $\th$ associated to $(\nabla,g)$ through the formula
\begin{equation}\label{eq:weylrel}
       \cn{}_X Y = \cn{g}_X Y + \th(X)Y + \th(Y)X - g(X,Y)T,
\end{equation} 
for all $X,Y \in \Xf(M)$, where $T\in\Xf(M)$ is the vector field dual to $\th$ with respect to $g$. 

Conversely, any differential 1-form $\th$ on $M$ together with a Riemannian metric $g\in c$ defines a Weyl structure on $(M,c)$ via \eqref{eq:weylrel}.

The curvature tensor $R^\nabla$ of $\cn{}$ is defined by 
\begin{equation}
R^\nabla(X,Y)Z: = \cn{}_{[X,Y]}Z-\cn{}_X\cn{}_YZ+\cn{}_Y\cn{}_XZ,\qquad \mbox{ for any }
X,Y,Z \in \Xf(M).
\end{equation}

The Ricci tensor $\ric{\cn{}}$ of $\cn{}$ is the bilinear form on $TM$ defined by
\begin{equation}\label{eq:riccn}
\ric{\cn{}}(X,Y):={\rm Tr}(Z\mapsto R^{\nabla}(X,Z)Y),\qquad \mbox{ for any }
X,Y \in \Xf(M).
\end{equation}
Notice that, unlike the Riemannian Ricci tensor, $\ric{\cn{}}$ is not symmetric in general.

For every metric $g$ in $c$, the scalar curvature of $\cn{}$ is the function
$\scal{\cn{}}_{g}$ defined as the $g$-trace of $\ric{\cn{}}$.

Equation \eqref{eq:weylrel} yields after some straightforward computations that the following relations link the Ricci and scalar curvatures of $\nabla$ with those of $\nabla^g$:
\begin{equation}
\label{eq:ricrel}
    %\tag{Ric}
    \ric{\cn{}} = \ric{\cn{g}} - (n-2)\bigl(\cn{g}\th - \th \otimes \th\bigr) + \bigl(\de^g\th - (n-2)\|\th\|_g^2\bigr)g,
\end{equation}
and
\begin{equation}\label{eq:scalrel}
    %\tag{Scal}
    \scal{\cn{}}_{g} = \scal{g} + 2(n-1)\de^g\th - (n-1)(n-2)\|\th\|_g^2.
\end{equation}

It is clear from \eqref{eq:ricrel} that $\ric{\cn{}} $ is, in general, not a symmetric tensor. Instead, its skew-symmetric part is given by the skew-symmetric part of $-(n-2)\nabla^g\theta$, namely $-\frac{n-2}2 {\rm d}\theta$. Therefore, $\ric{\cn{}}$ is symmetric if and only if $\theta$ is closed, i.e. $F=0$.

This suggests that in order to generalize the Einstein condition from Riemannian connections (see Definition \ref{defi:E}) to Weyl connections, one should require the trace-free {\em symmetric part} of $\ric{\cn{}}$ to vanish. Equivalently, by \eqref{eq:ricrel} and \eqref{eq:scalrel}, one has:

\begin{defi} A Weyl-structure $\cn{}$ on a conformal manifold $(M,c)$ is said to satisfy the {\em Weyl-Einstein condition} if the following  equation holds
\begin{equation}\label{eq:EW}
	%\tag{EW}
    \begin{aligned}
        \ric{\cn{}} %&= \frac{1}{n}\scal{\cn{}} c - \frac{(n-2)}{2}F  \\
        				&= \frac{1}{n}\scal{\cn{}}_g g - \frac{(n-2)}{2}F,
    \end{aligned}
\end{equation} for some (and thus every) $g\in c$.
\end{defi}

Using \eqref{eq:ricrel} and \eqref{eq:scalrel}, we can interpret the Weyl-Einstein condition in terms of any metric $g$ in the conformal class as follows. If $\theta$ is the Lee form of $\nabla$ with respect to $g$, then $\nabla$ is Weyl-Einstein if and only if 
\begin{equation}\label{eq:ricrelwe}
    %\tag{Ric}
 \ric{\cn{g}} =(n-2)\bigl((\cn{g}\th)^{\rm{sym}} - \th \otimes \th\bigr) +\frac1n(\scal{g} + (n-2)\de^g\th+(n-2)\|\th\|_g^2)g,
\end{equation}where $(\cn{g}\th)^{\rm{sym}}$ denotes the symmetric part of $\nabla^g \th$.

\begin{re}\label{rem:EvsWE}
\begin{enumerate}
\item Suppose that $(M,g)$ is an Einstein manifold and let $c$ be the conformal class of $g$. Then the Weyl structure on $(M,c)$ defined by $g$ and $\theta=0$ in \eqref{eq:weylrel} is Weyl-Einstein. 

\item Let  $\nabla$ be an exact Weyl structure on a conformal manifold $(M,c)$. Condition \eqref{eq:EW} is equivalent to requiring that the metric $g\in c$ whose Lee form vanishes is Einstein, and thus any metric in $c$ is conformally Einstein.
\end{enumerate}
\end{re}

Recall that on compact conformal manifolds, the existence of a metric $g\in c$ whose Lee form is co-closed is guaranteed whenever the dimension is $>2$:

\begin{theorem}\cite[\S I.20]{Ga84}\label{th:gaudmet}
Let $(M,c)$ be a compact conformal manifold of dimension $n > 2$ and let $\cn{}$ be a Weyl structure on $(M,c)$. Then there exists a metric $g_0$ in $c$, unique up to multiplication by a real number, such that the Lee-form $\th_0$ of $\cn{}$ associated to $g_0$ satisfies $\de^{g_0}\th_0 = 0$. 
\end{theorem}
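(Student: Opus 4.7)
The plan is to reduce the problem to finding a positive solution of a linear elliptic equation on $M$, and then to invoke standard spectral theory for elliptic operators on compact manifolds.

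First, I would parametrize the conformal class by writing an arbitrary $\tilde g \in c$ as $\tilde g = e^{2f} g$ for some fixed $g\in c$ and some $f\in C^\infty(M)$; the associated Lee form is $\theta_{\tilde g}=\theta_g - df$ by the transformation rule discussed before the definition of the Faraday form. Using the standard conformal rescaling formulas $\ast_{\tilde g}\alpha = e^{(n-2)f}\ast_g\alpha$ for $\alpha\in\Omega^1(M)$ and $\ast_{\tilde g}\eta = e^{-nf}\ast_g\eta$ for $\eta\in\Omega^n(M)$, a direct computation gives
\begin{equation*}
\delta^{\tilde g}\theta_{\tilde g} = e^{-2f}\bigl[\delta^g\theta_g - \Delta^g f - (n-2)\,g(df,\theta_g) + (n-2)\,\|df\|_g^2\bigr],
\end{equation*}
so that the equation $\delta^{\tilde g}\theta_{\tilde g}=0$ is a semilinear PDE in $f$. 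The substitution $u := e^{(n-2)f}$ is designed precisely to absorb the quadratic term $\|df\|_g^2$; expressing $\Delta^g f$ and $\|df\|_g^2$ in terms of $u$, $du$ and $\Delta^g u$ turns the equation into the linear elliptic equation
\begin{equation*}
Lu := \Delta^g u + (n-2)\,g(du,\theta_g) - (n-2)\,(\delta^g\theta_g)\,u = 0.
\end{equation*}
Finding the desired $g_0\in c$ is therefore equivalent to finding a positive solution $u>0$ of $Lu=0$, via $g_0 = u^{2/(n-2)}g$.

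Next, I would analyze $L$ using spectral theory on the compact manifold $M$. Since $L$ is a second-order elliptic operator with principal part $\Delta^g$, a routine integration by parts computes its formal $L^2(M,\mathrm{vol}_g)$-adjoint to be
\begin{equation*}
L^*v = \Delta^g v - (n-2)\,g(dv, \theta_g),
\end{equation*}
which in particular satisfies $L^*1 = 0$. Applying the Krein--Rutman theorem to the positivity-preserving compact resolvent $(L + \lambda\,\Id)^{-1}$, well defined for $\lambda$ sufficiently large thanks to the strong maximum principle, yields a simple real principal eigenvalue $\mu_0$ of $L$ admitting a strictly positive eigenfunction $u_0 > 0$. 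Pairing the identity $Lu_0 = \mu_0\,u_0$ against the constant function $1 \in \ker L^*$ gives
\begin{equation*}
\mu_0 \int_M u_0\,\mathrm{vol}_g \;=\; \int_M (Lu_0)\cdot 1\,\mathrm{vol}_g \;=\; \int_M u_0\cdot L^*1\,\mathrm{vol}_g \;=\; 0,
\end{equation*}
which forces $\mu_0 = 0$ since $u_0 > 0$. Therefore $Lu_0 = 0$, and $g_0 := u_0^{2/(n-2)}\,g$ is the required metric. The simplicity of the principal eigenvalue, also supplied by Krein--Rutman, guarantees that $u_0$ is unique up to a positive multiplicative constant, which translates into the uniqueness of $g_0$ up to a positive real scalar.

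The main obstacle is the existence and simplicity of a positive principal eigenvalue for the \emph{non-self-adjoint} operator $L$, which requires the Krein--Rutman framework rather than the elementary self-adjoint spectral theorem: one must verify that the strong maximum principle applies to $L + \lambda\,\Id$ for $\lambda$ large, so that the resolvent is positivity-improving and compact on a suitable Banach space of continuous functions. All the remaining ingredients---the conformal rescaling of the codifferential, the linearizing substitution $u = e^{(n-2)f}$, and the integration-by-parts identification of $L^*$---are direct computations that I would spell out in the actual proof.
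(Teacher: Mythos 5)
Your proof is correct; note that the paper itself does not prove this statement but quotes it from Gauduchon [Ga84], and your argument is essentially Gauduchon's original one: linearize $\delta^{\tilde g}\theta_{\tilde g}=0$ via the substitution $u=e^{(n-2)f}$ to get the elliptic operator $L$, observe that its formal adjoint $L^*=\Delta^g-(n-2)g(d\cdot,\theta_g)$ has no zeroth-order term and kills constants, and extract a positive, simple kernel element by positivity/maximum-principle (Krein--Rutman) arguments. The only point to spell out carefully in a full write-up is the strong positivity (irreducibility) of the resolvent needed for simplicity in Krein--Rutman, which your maximum-principle remark already addresses.
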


The metric $g_0$ in the above theorem is usually called the {\em Gauduchon metric} of $\cn{}$.

We finish the section by proving three technical lemmas that will be used in the sequel. These results appeared in the work of Gauduchon \cite{Ga95}, with  misprints on some coefficients. For the sake of completeness of our work, we include here the statements with the proper coefficients and their full proofs.

\begin{lem}\cite[Lemme 3]{Ga95}\label{lem:L3}
Let $\cn{}$ be a Weyl-Einstein structure on a conformal manifold $(M,c)$ of dimension $n>2$, and let $g$ be a metric in $c$. Then the following relation holds:
\begin{equation}\label{eq:gauduchonformel}
	  \frac{1}{n}{\rm d}\scal{\cn{}}_g - 2{\rm d}\de^g\th- 2(\de^g\th)\th  + 2\de^g(\cn{g}\th) - \de^g {\rm d}\th + 2\cn{g}_T \th + (n-3){\rm d}(\|\th\|_g^2)= 0,
\end{equation}
where $T$ is the $g$-dual of $\theta$. 
\end{lem}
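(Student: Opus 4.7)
The plan is to take the $g$-codifferential of the Weyl-Einstein equation \eqref{eq:ricrelwe}, apply the contracted Bianchi identity \eqref{eq:bianccontr} on the left-hand side, and then use \eqref{eq:scalrel} to trade the Riemannian scalar curvature for $\scal{\cn{}}_g$. No conceptual input is required beyond these two identities and the Leibniz rule for $\de^g$; the proof is a divergence computation.

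Concretely, set $\Psi := \scal{g}+(n-2)\de^g\th+(n-2)\|\th\|_g^2$, so that \eqref{eq:ricrelwe} reads $\ric{\cn{g}} = (n-2)\bigl((\cn{g}\th)^{\mathrm{sym}} - \th\otimes\th\bigr) + \tfrac{1}{n}\Psi g$. Applying $\de^g$ and using $\de^g\ric{\cn{g}} = -\tfrac{1}{2}\mathrm{d}\scal{g}$, the task reduces to computing the codifferentials of the three pieces on the right. In a local $g$-orthonormal frame one checks: first, since $\cn{g}g=0$, for any function $\Phi$ we get $\de^g(\Phi g) = -\mathrm{d}\Phi$; second, the Leibniz rule gives $\de^g(\th\otimes\th) = (\de^g\th)\th - \cn{g}_T\th$; third, as the antisymmetric part of $\cn{g}\th$ equals $\tfrac{1}{2}\mathrm{d}\th$, splitting $(\cn{g}\th)^{\mathrm{sym}} = \cn{g}\th - \tfrac{1}{2}\mathrm{d}\th$ yields $\de^g((\cn{g}\th)^{\mathrm{sym}}) = \de^g(\cn{g}\th) - \tfrac{1}{2}\de^g\mathrm{d}\th$. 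Putting these together I obtain
\begin{equation*}
-\tfrac{1}{2}\mathrm{d}\scal{g} = (n-2)\Bigl[\de^g(\cn{g}\th) - \tfrac{1}{2}\de^g\mathrm{d}\th - (\de^g\th)\th + \cn{g}_T\th\Bigr] - \tfrac{1}{n}\mathrm{d}\Psi.
\end{equation*}

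It remains to express $\mathrm{d}\scal{g}$ and $\mathrm{d}\Psi$ in terms of $\mathrm{d}\scal{\cn{}}_g$. From \eqref{eq:scalrel} one reads off $\mathrm{d}\scal{g} = \mathrm{d}\scal{\cn{}}_g - 2(n-1)\mathrm{d}\de^g\th + (n-1)(n-2)\mathrm{d}(\|\th\|_g^2)$, and substituting into the definition of $\Psi$ gives $\mathrm{d}\Psi = \mathrm{d}\scal{\cn{}}_g - n\,\mathrm{d}\de^g\th + n(n-2)\mathrm{d}(\|\th\|_g^2)$. Inserting both expressions into the displayed equation, the left-hand side collapses to $-\tfrac{n-2}{2n}\mathrm{d}\scal{\cn{}}_g + (n-2)\mathrm{d}\de^g\th - \tfrac{(n-2)(n-3)}{2}\mathrm{d}(\|\th\|_g^2)$; dividing through by $-\tfrac{n-2}{2}$ (permissible since $n>2$) and rearranging produces exactly \eqref{eq:gauduchonformel}. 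The only step requiring care is the codifferential of $(\cn{g}\th)^{\mathrm{sym}}$; otherwise the sole difficulty is numerical bookkeeping of coefficients through the two substitutions, which, as the authors note, is precisely where Gauduchon's original formulation contained misprints.
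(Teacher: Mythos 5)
Your proposal is correct and follows essentially the same route as the paper: a codifferential of the Weyl--Einstein equation combined with the contracted Bianchi identity \eqref{eq:bianccontr}, the scalar curvature relation \eqref{eq:scalrel}, and the elementary identities $\de^g(fg)=-{\rm d}f$ and $\de^g(\th\otimes\th)=(\de^g\th)\th-\cn{g}_T\th$; the coefficients you obtain check out. The only difference is organizational — you take $\de^g$ of the already-combined equation \eqref{eq:ricrelwe}, whereas the paper applies $\de^g$ to \eqref{eq:ricrel} and \eqref{eq:EW} separately before substituting — which amounts to the same computation.
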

\begin{proof}
Let $\{e_i\}_{i=1}^n$ be a $g$-orthonormal local frame of $TM$ and  $\{e^i\}_{i=1}^n$ its dual basis of $T^*M$. For any $f\in \Cc^{\infty}(M)$,   \eqref{eq:codif} gives
\begin{equation}\label{eq:codiff1}
	\begin{aligned}
		\de^g(fg) &= - \sum_{i=1}^n e_i \lrcorner \cn{g}_{e_i}(fg) = - \sum_{i=1}^n e_i(f)(e_i \lrcorner g) - \sum_{i=1}^n f( e_i \lrcorner \cn{g}_{e_i} g)  \\
		&= - \sum_{i=1}^n e_i(f)(e_i \lrcorner g)= -\sum_{i=1}^n {\rm d}f(e_i)e_i^*= -{\rm d}f,
		\end{aligned}
\end{equation}where we used that $\cn{g}$ is the Levi-Civita connection of $g$. 
Furthermore, 
\begin{equation}\label{eq:codiff2}
	\begin{aligned}
		\de^g(\th \otimes \th) &= - \sum_{i=1}^n e_i \lrcorner \cn{g}_{e_i}(\th \otimes \th) \\ &= - \sum_{i=1}^n \cn{g}_{e_i}\th(e_i)\th - \sum_{i=1}^n \th(e_i) \cn{g}_{e_i}\th  \\
		&= (\de^g \th) \th - \cn{g}_T \th.
	\end{aligned}
\end{equation}
On the one hand, by \eqref{eq:codiff1} and \eqref{eq:codiff2}, the co-differential $\de^g$ applied to (\ref{eq:ricrel}) yields 
\begin{equation}
	\de^g\ric{\cn{}} = \de^g\ric{\cn{g}} - (n-2)\bigl(\de^g(\cn{g}\th) - (\de^g \th) \th + \cn{g}_T \th) -{\rm d}\de^g\th + (n-2){\rm d}(\|\th\|_g^2).
\end{equation}
On the other hand, (\ref{eq:EW}) and \eqref{eq:codiff1} imply
\begin{equation}
	\begin{aligned}
	\de^g\ric{\cn{}} = -\frac{1}{n} {\rm d}\scal{\cn{}}_g - \frac{(n-2)}{2}\de^g {\rm d}\th.
	\end{aligned}
\end{equation}
Therefore, 
\begin{multline}\label{eq:deltaricg}
	 \de^g\ric{\cn{g}}  = (n-2)\bigl(\de^g(\cn{g}\th) - (\de^g \th) \th + \cn{g}_T \th)  + {\rm d}\de^g\th \\ - (n-2){\rm d}(\|\th\|_g^2) - \frac{1}{n} {\rm d}\scal{\cn{}}_g - \frac{(n-2)}{2}\de^g {\rm d}\th.
\end{multline}

Moreover, applying the exterior differential to (\ref{eq:scalrel}) and using \eqref{eq:bianccontr} yields
\begin{equation}\label{eq:dscal}
	\begin{aligned}
		{\rm d}\scal{\cn{}}_g = -2\de^g \ric{\cn{g}} + 2(n-1){\rm d}\de^g \th - (n-1)(n-2){\rm d}(\|\th\|^2_g).
	\end{aligned}
\end{equation}
Isolating $\de^g \ric{\cn{g}} $ in \eqref{eq:dscal} and using \eqref{eq:deltaricg} we get
\begin{multline}
	 	 -\frac12{\rm d}\scal{\cn{}}_g +(n-1){\rm d}\de^g \th -\frac{ (n-1)(n-2)}2{\rm d}(\|\th\|^2_g)	\\
	 = (n-2)\bigl(\de^g(\cn{g}\th) - (\de^g \th) \th + \cn{g}_T \th)  + {\rm d}\de^g\th  - (n-2){\rm d}(\|\th\|_g^2)\\ - \frac{1}{n} {\rm d}\scal{\cn{}}_g - \frac{(n-2)}{2}\de^g {\rm d}\th.
	 \end{multline}
Regrouping terms on the right hand side, we obtain 
\begin{multline}
	 	 0
	 = (n-2)\bigl(\de^g(\cn{g}\th) - (\de^g \th) \th + \cn{g}_T \th)  +(2-n){\rm d}\de^g\th  + \frac{(n-2)(n-3)}2{\rm d}(\|\th\|_g^2)\\+ \frac{n-2}{2n} {\rm d}\scal{\cn{}}_g - \frac{(n-2)}{2}\de^g {\rm d}\th.
	 \end{multline}
	Multiplying this equation by $2/(n-2)$ yields \eqref{eq:gauduchonformel}.
\end{proof}

The next two results refer to the Gauduchon metric --- see Theorem~\ref{th:gaudmet} --- associated to Weyl structures on compact manifolds. 

\begin{lem}\cite[Théorème 2(ii)]{Ga95} 
\label{lem:threerels}
Let $\cn{}$ be a Weyl-Einstein structure on a connected compact conformal manifold $(M,c)$ which is oriented and of dimension $n>2$, and let $g$ be the Gauduchon metric in $c$. 
There exists a constant $K$ such that
\begin{equation}\label{eq:threerel1}
	\begin{aligned}
		\scal{g} - (n+2)\|\th\|_g^2 = K \\
	\end{aligned}
\end{equation}
and
\begin{equation}\label{eq:threerel2}
	\begin{aligned}
		\scal{\cn{}}_g + n(n-4)\|\th\|_g^2 = K.
	\end{aligned}
\end{equation}
Furthermore, 
\begin{equation}\label{eq:threerel3}
	\begin{aligned}
		\De^g \th = \frac{2}{n}\scal{\cn{}}_g \th.
	\end{aligned}
\end{equation}
\end{lem}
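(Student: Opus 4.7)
The plan is to derive a single pointwise master equation from Lemma~\ref{lem:L3}, then use integral identities to prove that the $g$-dual vector field $T$ of $\th$ is Killing, and finally read off the three identities as consequences.

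\textbf{Step 1 (master equation).} I will specialize Lemma~\ref{lem:L3} to the Gauduchon gauge $\de^g\th=0$, replace $\de^g(\cn{g}\th)=(\cn{g})^*\cn{g}\th$ via the Bochner--Weitzenböck formula \eqref{eq:WF}, and substitute $\ric{\cn{g}}(\th)$ from the Weyl-Einstein condition \eqref{eq:ricrelwe}, after using \eqref{eq:scalrel} with $\de^g\th=0$ to express $\scal{g}$ in terms of $\scal{\cn{}}_g$ and $\|\th\|_g^2$. Together with the elementary identity $T\lrcorner\dd\th=\cn{g}_T\th-\tfrac12\dd\|\th\|_g^2$, the $\|\th\|_g^2\th$ terms cancel and one obtains the master equation
\begin{equation}\label{eq:master}
	\De^g\th \;=\; -\tfrac{1}{n}\dd\scal{\cn{}}_g \;+\; (n-4)\, T\lrcorner \dd\th \;+\; \tfrac{2}{n}\scal{\cn{}}_g\,\th.
\end{equation}

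\textbf{Step 2 (two integral identities; expected main obstacle).} Taking the pointwise product of \eqref{eq:master} with $\th$, the $T\lrcorner\dd\th$ term vanishes by antisymmetry of $\dd\th$, yielding the pointwise scalar identity $\langle\De^g\th,\th\rangle_g = -\tfrac{1}{n}\dd\scal{\cn{}}_g(T) + \tfrac{2}{n}\scal{\cn{}}_g\|\th\|_g^2$. Integrating this using $\int_M\langle\De^g\th,\th\rangle_g = \int_M\|\dd\th\|_g^2$ (valid by the Gauduchon gauge) and $\int_M\dd\scal{\cn{}}_g(T)=\int_M\scal{\cn{}}_g\de^g\th=0$ produces
\begin{equation}\label{eq:I1}
	\int_M \|\dd\th\|_g^2 \;=\; \tfrac{2}{n}\int_M \scal{\cn{}}_g\,\|\th\|_g^2.
\end{equation}
Separately, rewriting the left-hand side of the same pointwise identity via the scalar Weitzenböck identity $\langle\De^g\th,\th\rangle_g = \tfrac12\De^g\|\th\|_g^2 + \|\cn{g}\th\|_g^2 + \ric{\cn{g}}(\th,\th)$ and computing $\ric{\cn{g}}(\th,\th)$ from \eqref{eq:ricrelwe} (by double contraction with $T$, using $(\cn{g}_T\th)(T)=\tfrac12\dd\|\th\|_g^2(T)$), then integrating, yields
\begin{equation}\label{eq:I2}
	\int_M \|\cn{g}\th\|_g^2 \;=\; \tfrac{1}{n}\int_M \scal{\cn{}}_g\,\|\th\|_g^2.
\end{equation}
Comparing \eqref{eq:I1} and \eqref{eq:I2} gives $\int_M\|\dd\th\|_g^2 = 2\int_M\|\cn{g}\th\|_g^2$; combined with the pointwise orthogonal decomposition $\|\cn{g}\th\|_g^2 = \|(\cn{g}\th)^{\rm sym}\|_g^2 + \tfrac12\|\dd\th\|_g^2$, this forces $(\cn{g}\th)^{\rm sym}\equiv 0$, so $T$ is a Killing vector field. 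This step is the crux and the expected main obstacle: only the precise coefficient match between \eqref{eq:I1} and \eqref{eq:I2}---which fundamentally uses the Weyl-Einstein hypothesis---saturates the pointwise inequality $\|\cn{g}\th\|_g^2\geq\tfrac12\|\dd\th\|_g^2$.

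\textbf{Step 3 (conclusion).} Once $T$ is Killing, \eqref{eq:ricrelwe} reduces to $\ric{\cn{g}}(\th) = \tfrac{1}{n}\scal{\cn{}}_g\th$, and $\cn{g}\th = \tfrac12\dd\th$ as a $(0,2)$-tensor gives $(\cn{g})^*\cn{g}\th = \tfrac12\De^g\th$; Weitzenböck immediately produces \eqref{eq:threerel3}. Substituting \eqref{eq:threerel3} back into \eqref{eq:master} gives $\dd\scal{\cn{}}_g = n(n-4)\,T\lrcorner\dd\th$, while the Cartan formula $\lie_T\th = \dd(T\lrcorner\th)+T\lrcorner\dd\th$ combined with $\lie_T\th = 0$ (a consequence of $\lie_T g=0$) gives $T\lrcorner\dd\th = -\dd\|\th\|_g^2$. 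Hence $\dd(\scal{\cn{}}_g+n(n-4)\|\th\|_g^2)=0$ on the connected manifold $M$, so this function is a constant $K$, which is exactly \eqref{eq:threerel2}; the equivalent \eqref{eq:threerel1} follows by rewriting $K$ via \eqref{eq:scalrel} in the Gauduchon gauge.
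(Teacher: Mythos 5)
Your proof is correct, and it takes a genuinely different route through the middle of the argument than the paper does. I checked your master equation: it does follow from Lemma~\ref{lem:L3} in the Gauduchon gauge after writing $\de^g(\cn{g}\th)=(\cn{g})^*\cn{g}\th=\De^g\th-\ric{\cn{g}}(\th)$ via \eqref{eq:WF} and substituting \eqref{eq:ricrelwe} for $\ric{\cn{g}}(\th)$; the $\|\th\|_g^2\,\th$ terms do cancel and the coefficients $-\tfrac1n$, $(n-4)$, $\tfrac2n$ are right. The paper also starts from \eqref{eq:gauduchonformel} with $\de^g\th=0$, but it establishes the Killing property of $T$ by a single direct integration by parts: pairing the identity with $\th$, rewriting $\cn{g}\th-\tfrac12\dd\th=\tfrac12\lie_Tg$, and using the auxiliary form $\ps(X)=\langle X\lrcorner(\cn{g}\lie_Tg),\th\rangle$ to land on $\sum_i\int_M\|e_i\lrcorner\lie_Tg\|^2\,{\rm d}v=0$ --- no Bochner-type formula is needed at that stage. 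You instead obtain $\lie_Tg=0$ by playing the $1$-form Weitzenb\"ock formula against the scalar Bochner identity, both fed with the Weyl--Einstein expression \eqref{eq:ricrelwe}, and concluding from the equality case of $\|\cn{g}\th\|_g^2\ge\tfrac12\|\dd\th\|_g^2$; this costs two integral identities instead of one, but it isolates exactly which coefficient match the Weyl--Einstein hypothesis supplies, which is conceptually transparent. The endgame is also reorganized: the paper proves \eqref{eq:threerel1} first, by taking $\de^g$ of the reduced Ricci identity and invoking the contracted Bianchi identity \eqref{eq:bianccontr}, and gets \eqref{eq:threerel3} last, whereas you derive \eqref{eq:threerel3} first and then read off the constancy of $\scal{\cn{}}_g+n(n-4)\|\th\|_g^2$ from the master equation together with $T\lrcorner\dd\th=-\dd(\|\th\|_g^2)$ (equivalent to the paper's \eqref{eq:killingT}), thereby avoiding a second appeal to Bianchi. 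The only point to watch is a convention: use the two-form norm of $\dd\th$ consistently both in $\int_M\langle\De^g\th,\th\rangle=\int_M\|\dd\th\|_g^2$ and in the decomposition $\|\cn{g}\th\|_g^2=\|(\cn{g}\th)^{\rm sym}\|_g^2+\tfrac12\|\dd\th\|_g^2$, as you implicitly do; with the tensor norm the factors become $\tfrac12$ and $\tfrac14$, and the conclusion is unchanged.
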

\begin{proof} Since the co-differential of the Lee form vanishes,  (\ref{eq:gauduchonformel}) becomes
\begin{equation}\label{eq:gauduchonformelsimpl}
	  \frac{1}{n}{\rm d}\scal{\cn{}}_g  + 2\de^g(\cn{g}\th) - \de^g {\rm d}\th + 2\cn{g}_T \th + (n-3){\rm d}(\|\th\|_g^2)= 0.
\end{equation}
Let ${\rm d}v$ denote the natural volume form on $(M,g)$. Since $\de^g$ is the formal adjoint of ${\rm d}$ for the global inner product induced by $g$ on the bundle of differential forms and $\de^g\th = 0$, it follows
\begin{equation}
	\begin{aligned}
		\int_M \langle {\rm d}(\|\th\|_g^2), \th \rangle {\rm d}v = \int_M \langle {\rm d}\scal{\cn{}}_g, \th \rangle {\rm d}v = 0.
	\end{aligned}
\end{equation}
Furthermore, the Koszul-formula yields
\begin{eqnarray}
		\langle \cn{g}_T \th, \th \rangle &= &\cn{g}_T \th(T) = T(g(T,T)) - g(T,\cn{g}_T T) =\frac{1}{2}{\rm d}(\|\th\|_g^2)(T) \\
		&= & \frac{1}{2}\langle {\rm d}(\|\th\|_g^2), \th \rangle,
\end{eqnarray}
thus,
\begin{equation}
	\begin{aligned}
	 \int_M \langle \cn{g}_T \th, \th \rangle {\rm d}v = \int_M \langle {\rm d}(\|\th\|_g^2), \th \rangle {\rm d}v = 0.
	\end{aligned}
\end{equation}
Therefore (\ref{eq:gauduchonformelsimpl}) yields 
\begin{equation}\label{eq:deln}
	 \int_M \langle \de^g\cn{g} \th - \frac{1}{2}\de^g {\rm d}\th, \th \rangle {\rm d}v = 0.
\end{equation}

Recall that the symmetric and skew-symmetric parts of $\nabla^g\theta$ are given, respectively, by the Lie derivative of the metric with respect to $T$, and the exterior differential of $\theta$; namely, 
\begin{equation}\label{eq:nabss}
\nabla^g\th=\frac12\mathcal L_Tg+\frac12 {\rm d}\th.
\end{equation}
By \eqref{eq:codif}, if $\{e_i\}_{i=1}^n$ denotes a local orthonormal frame, \eqref{eq:deln} becomes
\begin{equation}\label{eq:globscals}
	\begin{aligned}
	0&=\int_M \langle \de^g(\cn{g} \th - \frac{1}{2}{\rm d}\th), \th \rangle {\rm d}v  = \frac12\int_M \langle \de^g\mathcal L_Tg, \th \rangle\, {\rm d}v \\
	&= -\frac12 \sum_{i=1}^n \int_M \langle e_i \lrcorner (\cn{g}_{e_i}\mathcal L_Tg), \th \rangle\, {\rm d}v \\
	&= - \frac12\sum_{i=1}^n \int_M \left\{
	 e_i(\langle e_i \lrcorner (\mathcal L_Tg), \th \rangle) -  \langle (\cn{g}_{e_i}e_i) \lrcorner (\mathcal L_Tg), \th \rangle \right.\\&\qquad\qquad\left.- \langle e_i \lrcorner (\mathcal L_Tg), \cn{g}_{e_i}\th \rangle\, \right\} {\rm d}v.
	\end{aligned}
\end{equation}
Now define the $1$-form $\ps$ by $\ps(X) = \langle X\lrcorner( \nabla^g\mathcal L_Tg), \th \rangle$ for all $X\in\Xf(M)$. Then, since $M$ is compact, the integral of $\de^g \ps$ on $M$ vanishes. Explicitly, by \eqref{eq:codif},\begin{equation} \label{eq:intMcod}
	\begin{aligned}
0=\int_M	 (\delta^g \psi) {\rm d}v= -\int_M\left\{ \sum_{i=1}^n e_i(\langle e_i \lrcorner (\mathcal L_Tg), \th \rangle) -  \langle (\cn{g}_{e_i}e_i) \lrcorner (\mathcal L_Tg), \th \rangle\right\}{\rm d}v.
	\end{aligned}
\end{equation}
 Moreover, from \eqref{eq:nabss} we further get
\begin{equation} \label{eq:3t}
	\begin{aligned}
\int_M \langle e_i \lrcorner (\mathcal L_Tg), \cn{g}_{e_i}\th \rangle\, {\rm d}v &= \int_M \left\{\|e_i \lrcorner (\mathcal L_Tg)\|^2 + \frac{1}{2} \langle e_i \lrcorner (\mathcal L_Tg), e_i \lrcorner {\rm d}\th \rangle\,\right\} {\rm d}v \\
	&= \int_M \|e_i \lrcorner (\mathcal L_Tg)\|^2 {\rm d}v.
	\end{aligned}
\end{equation}
Therefore, (\ref{eq:globscals}), \eqref{eq:intMcod} and \eqref{eq:3t} yield $\sum_{i=0}^n \int_M \|e_i \lrcorner (\mathcal L_Tg)\|^2{\rm d}v = 0$. This implies  $\|e_i \lrcorner (\mathcal L_Tg)\|^2 = 0$ for all $1\leq i \leq n$. We thus conclude from \eqref{eq:nabss}
\begin{equation}\label{eq:killing}
	\lie_T g = 0,\qquad	\cn{g} \th = \frac{1}{2}{\rm d}\th.
\end{equation}
In particular $T$ is a Killing vector field for $g$ and
\begin{equation}\label{eq:killingT}
		\cn{g}_T \th = -\frac{1}{2}{\rm d}(\|\th\|_g^2). 
\end{equation}
due to the Koszul formula and the definition of ${\rm d}\th$.

Now, using (\ref{eq:killing}) and $\delta^g\theta=0$, (\ref{eq:ricrel})  reads
\begin{equation}
    \ric{\cn{}} = \ric{\cn{g}} - (n-2)\bigl(\frac{1}{2}{\rm d}\th - \th \otimes \th\bigr)  - (n-2)\|\th\|_g^2\, g, 
\end{equation}
and (\ref{eq:scalrel}) becomes
\begin{equation}\label{eq:scalG}
    \scal{\cn{}}_{g} = \scal{g} - (n-1)(n-2)\|\th\|_g^2.
\end{equation}
Since $\cn{}$ satisfies the Weyl-Einstein condition (\ref{eq:EW}) we thus get 
\begin{equation}\label{eq:ricG}
    \ric{\cn{g}} = \frac{1}{n}\scal{g} g + (n-2)\bigl(\frac{1}{n}\|\th\|_g^2 g - \th \otimes \th\bigr).
\end{equation}Taking the co-differential in this equation and using \eqref{eq:codiff1}, \eqref{eq:codiff2} and \eqref{eq:killingT}, we get
\begin{equation}
	\begin{aligned}
			\de^g\ric{\cn{g}} &= -\frac{1}{n}{\rm d}\scal{g} - (n-2)\bigl(\frac{1}{n}{\rm d}(\|\th\|_g^2) + (\de^g \th)\th - \cn{g}_T \th)\\
			&= -\frac{1}{n}{\rm d}\scal{g} - (n-2)\bigl(\frac{1}{n}{\rm d}(\|\th\|_g^2) - \cn{g}_T \th) \\
			&= -\frac{1}{n}{\rm d}\scal{g} -\frac{ (n-2)(n+2)}2{\rm d}(\|\th\|_g^2) ,
	\end{aligned}
\end{equation}By \eqref{eq:bianccontr}, this equation equals $-\frac12 {\rm d}\scal{g}$ so we finally obtain
\begin{equation}
	\begin{aligned}
		 {\rm d}\scal{g} - (n+2){\rm d}(\|\th\|_g^2) = 0.
	\end{aligned}
\end{equation}
Since $M$ is connected, there exists a constant $K$ such that
\begin{equation}
	\begin{aligned}
		 \scal{g} - (n+2)\|\th\|_g^2 = K,
	\end{aligned}
\end{equation}
and by (\ref{eq:scalG})
\begin{equation}\label{eq:scalneg}
	\begin{aligned}
		 \scal{\cn{}}_g + n(n-4)\|\th\|_g^2 = K.
	\end{aligned}
\end{equation}
Now only (\ref{eq:threerel3}) remains to be proven. Since $\de^g\th = 0$, we have
\begin{equation}
	\De^g \th = \de^g {\rm d}\th.
\end{equation}
On the one hand, (\ref{eq:WF}), \eqref{eq:ricG} and \eqref{eq:scalG} imply
\begin{equation}\label{eq:Lth}
	\begin{aligned}
	\De^g\th &=  (\cn{g})^*\cn{g}\th  + \ric{\cn{g}}(\th) \\
	&= (\cn{g})^*\cn{g}\th + \frac{1}{n}\scal{g} \th + (n-2)(\frac{1}{n}\|\th\|^2_g \th - \|\th\|^2_g \th) \\
	&= (\cn{g})^*\cn{g}\th + \frac{1}{n}(\scal{g} \th - (n-2)(n-1)\|\th\|^2_g \th)\\
	&= (\cn{g})^*\cn{g}\th + \frac{1}{n}\scal{\cn{}}_g \th.
	\end{aligned}
\end{equation}
On the other hand, (\ref{eq:killing}) implies
\begin{equation*}
	\begin{aligned}
		(\cn{g})^*\cn{g}\th &= - \sum_{i=1}^n \cn{g}_{e_i} \cn{g}_{e_i} \th \\
		&= - \frac{1}{2}\sum_{i=1}^n \cn{g}_{e_i}(e_i \lrcorner {\rm d}\th) \\
		&= - \frac{1}{2}\de^g {\rm d}\th = - \frac{1}{2}\De^g \th.
	\end{aligned}
\end{equation*}
This equation together with \eqref{eq:Lth} finally yields \eqref{eq:threerel3}.
\end{proof}
The last point of Lemma~\ref{lem:threerels} yields the following important result.
\begin{lem}\cite[Théorème 2(iii)]{Ga95}\label{lem:gauduchonexact}
Let $\cn{}$ be a Weyl-Einstein structure on a connected compact conformal manifold $(M,c)$ which is oriented and of dimension $n>2$, and let $g$ be the Gauduchon metric in $c$.  If $\scal{\cn{}}_g < 0$, then $\cn{} = \cn{g}$ and $g$ is an Einstein metric.
\end{lem}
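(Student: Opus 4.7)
The plan is to combine the pointwise identity \eqref{eq:threerel3} from Lemma~\ref{lem:threerels} with a straightforward $L^2$-integration argument, exploiting the defining property $\delta^g\theta=0$ of the Gauduchon metric. Since the key identity has already been established, the remaining work is essentially analytic and short.

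First, I pair \eqref{eq:threerel3} with $\theta$ and integrate over the compact manifold $M$ against the volume form ${\rm d}v$. Using $\Delta^g={\rm d}\delta^g+\delta^g{\rm d}$ and the self-adjointness of $\delta^g$ as the formal adjoint of ${\rm d}$, together with $\delta^g\theta=0$, the left-hand side becomes
\begin{equation*}
\int_M \langle \Delta^g\theta,\theta\rangle\,{\rm d}v \;=\; \int_M \bigl(|{\rm d}\theta|_g^2+(\delta^g\theta)^2\bigr)\,{\rm d}v \;=\; \int_M |{\rm d}\theta|_g^2\,{\rm d}v \;\geq\; 0,
\end{equation*}
while the right-hand side equals $\tfrac{2}{n}\int_M \scal{\cn{}}_g\|\theta\|_g^2\,{\rm d}v$, which by the hypothesis $\scal{\cn{}}_g<0$ is $\leq 0$. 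The two integrals being equal forces both to vanish.

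Vanishing of the right-hand integral, combined with the pointwise strict negativity of $\scal{\cn{}}_g$, forces $\|\theta\|_g^2\equiv 0$ on $M$, hence $\theta=0$. Plugging $\theta=0$ into \eqref{eq:weylrel} yields $\cn{}=\cn{g}$, and the Weyl-Einstein condition \eqref{eq:ricrelwe} reduces to $\ric{\cn{g}}=\tfrac{1}{n}\scal{g}\,g$, so that $g$ is Einstein. There is no substantial obstacle here: equation \eqref{eq:threerel3} does all the heavy lifting, and the only subtlety is the interpretation of the hypothesis $\scal{\cn{}}_g<0$ as a pointwise inequality, which however is natural since $\scal{\cn{}}_g$ is a function on $M$.
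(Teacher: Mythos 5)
Your proposal is correct and follows essentially the same route as the paper: both take the identity \eqref{eq:threerel3}, pair it with $\th$, integrate over $M$ using $\de^g\th=0$ to identify the left-hand side with $\int_M\|{\rm d}\th\|_g^2\,{\rm d}v\ge 0$, and use the pointwise negativity of $\scal{\cn{}}_g$ to force $\th=0$, whence $\cn{}=\cn{g}$ and the Weyl-Einstein condition reduces to the Einstein condition for $g$. The only cosmetic difference is that the paper first rewrites $\De^g\th=\de^g{\rm d}\th$ before integrating, whereas you integrate $\De^g$ by parts and discard the $(\de^g\th)^2$ term.
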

\begin{proof}
Let ${\rm d}v$ denote the volume form associated to $g$. Since $\de^g \th = 0$, equation (\ref{eq:threerel3}) becomes 
\begin{equation}
	\begin{aligned}
		\de^g {\rm d}\th = \frac{2}{n}\scal{\cn{}}_g \th.
	\end{aligned}
\end{equation}
Hence, taking the global inner product of this equation with $\th$ yields
\begin{equation}
	\begin{aligned}
		\frac{2}{n}\int_M \scal{\cn{}}_g \|\th\|_g^2 {\rm d}v=\int_M \langle \de^g {\rm d}\th, \th \rangle {\rm d}v= \int_M \|{\rm d}\th\|^2_g {\rm d}v \geq 0
	\end{aligned}
\end{equation}
since ${\rm d}$ is the formal adjoint of $\de^g$. This equation implies that if $\scal{\cn{}}_g < 0$, then $\|\th\|_g^2 = 0$. It follows that  $\nabla=\nabla^g$ and, by \eqref{eq:EW}, that $g$ is an Einstein metric.
\end{proof}

\section{Left-invariant Weyl structures on conformal Lie groups}\label{s:nilman}\label{s:conform}

In this section we describe geometrical features of Riemannian Lie groups and we extend them to left-invariant conformal and Weyl structures.

Let $G$ be a connected Lie group of dimension $n$ endowed with a left-invariant metric $g$ and let $\cn{g}$ denote the Levi-Civita connection of $(G,g)$; we call $(G,g)$ a Riemannian Lie group. Let $\gf$ be the Lie algebra of $G$ and consider the inner product induced by the left-invariant metric $g$ on $\gf$, which will be also denoted by $g$. By abuse of language, we may say that $\nabla^g$ is the Levi Civita connection of $(\mg,g)$, and similarly for the curvature tensors.

The covariant derivative of a left-invariant vector field with respect to another left-invariant vector field is again left-invariant. Identifying left-invariant vector fields $X$ on $G$ with their values $x$ at $\mg$, the Koszul's formula reads
\begin{equation}\label{eq:Koszullie}
    \cn{g}_x y = \frac{1}{2}\bigl([x,y] - \ad_y^*x - \ad_x^*{y}\bigr), \quad \mbox{ for all }x,y\in\mg,
\end{equation} 
where $\ad_x^*$ denotes the $g$-adjoint operator of $\ad_x$.

Any curvature tensor on $G$ is constant when evaluated on left-invariant vector fields and thus it is determined by its value on elements in $\mg$. Hence curvature tensors of $(G,g)$ will be treated as algebraic tensors on $\mg$. In particular, one has the following expression for the Ricci-tensor \cite[\S 7.38]{Besse2008}
\begin{equation}\label{eq:PBZ}
\Ric^{\nabla^g}=P-\frac12 B-\frac12 (\ad_z+\ad_z^*),
\end{equation}
where $B$ denotes the Cartan-Killing form of $\mg$, $z\in\mg$ is the unique element such that $g(z,x)=\tr \ad_x$ for all $x\in\mg$, and $P$ is defined by means of an orthonormal basis $\{e_i\}_{i=1}^n$ of $\mg$ as
\begin{equation}
	P(x,y):= \sum_{i,k} -\tfrac12 g(e_i,[x,e_k])g(e_i,[y,e_k]) + \tfrac14 g(x,[e_i,e_k])g(y,[e_i,e_k]), \quad \mbox{ for all }x,y\in\mg.
\label{eq:Ric4}
\end{equation}

We will be interested in conformal structures and Weyl structures on Lie groups that are invariant under left-translations, in the following sense.

\begin{defi}A conformal Lie group $(G,c)$ is a Lie group $G$ endowed with a conformal class $c$ for which there exists a left-invariant representative $g\in c$. In particular, $(G,g)$ is a Riemannian Lie group.
\end{defi}
Notice that a left-invariant representative in a conformal class $c$ is unique up to constant rescaling.

\begin{defi}A Weyl structure on a conformal Lie group $(G,c)$ is called left-invariant if its Lee form with respect to any left-invariant metric $g\in c$ is left-invariant.
\end{defi}

It is clear from Section \ref{ch:prelim}, that on a conformal manifold $(M,c)$, exact Weyl structures on $(M,c)$ are the Levi-Civita connections of the Riemannian metrics in $c$. In the context of conformal Lie groups $(G,c)$, exact left-invariant Weyl structures correspond to Riemannian metrics in $c$ for which left-translations are homotheties.

Given a conformal Lie group $(G,c)$ and fixing a left-invariant metric $g\in c$, left-invariant Weyl structures on $(G,c)$ are in one-to-one correspondence with left-invariant 1-forms on $G$, i.e. elements in $\mg^*$, through formula \eqref{eq:weylrel}.
Notice that, having fixed the left-invariant metric $g\in c$ (unique up to constant rescaling), one can identify  elements in $\mg^*$ with vectors in $\mg$ via $g$. This identification is used in the following lemma to write the Weyl-Einstein condition of a left-invariant Weyl structure in terms of its  Lee form $\theta$  and the Ricci tensor of $g$.

\begin{lem} \label{lem:Ricgen} Let $(G,c)$ be a conformal Lie group and let $g\in c$ be a left-invariant metric. The Weyl structure $\nabla$ induced by $\th\in\mg^*$ (identified with its metric dual $\theta\in \mg$) is Weyl-Einstein if and only if the following equation holds
\begin{equation}\label{eq:Ricgen}
\ric{\nabla^g}=\frac1{n}\left(\scal{g}  +(n-2)(\tr\ad_\th+\|\th\|_g^2)\right)g-(n-2)\left[\ad_\th^{\rm{sym}}+\theta\otimes\th\right],
\end{equation}
where $\ad_\th^{\rm{sym}}=(\ad_\th+\ad_\th^*)/2$ is the symmetric part of $\ad_\th$.
\end{lem}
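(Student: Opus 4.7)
The plan is to start from the equivalent reformulation of the Weyl--Einstein condition given in \eqref{eq:ricrelwe}, namely
\[
\ric{\cn{g}} = (n-2)\bigl((\cn{g}\th)^{\rm sym} - \th \otimes \th\bigr) + \tfrac{1}{n}\bigl(\scal{g} + (n-2)\de^g\th + (n-2)\|\th\|_g^2\bigr)g,
\]
and to translate the two terms that still involve the covariant derivative of $\th$, namely $(\cn{g}\th)^{\rm sym}$ and $\de^g\th$, into purely algebraic quantities on $\mg$ using the Koszul formula \eqref{eq:Koszullie}. Since $g$ and $\th$ are both left-invariant, both sides of the equation are left-invariant tensors on $G$, so it suffices to compare them at the identity.

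Let $T\in\mg$ denote the $g$-dual of $\th$. Applying \eqref{eq:Koszullie} with $y=T$ gives
\[
\cn{g}_x T = \tfrac12\bigl([x,T] - \ad_T^* x - \ad_x^* T\bigr),
\]
and pairing with $y\in\mg$ one rewrites each summand as $g([x,T],y) = -g(\ad_\th x,y)$, $g(\ad_T^* x,y) = g(\ad_\th^* x,y)$ and $g(\ad_x^* T, y) = \th([x,y])$. Hence
\[
(\cn{g}\th)(x,y) = g(\cn{g}_x T,y) = -g(\ad_\th^{\rm sym}\,x,y) - \tfrac12\th([x,y]).
\]
The second summand is skew in $(x,y)$, so the symmetric part is $(\cn{g}\th)^{\rm sym}(x,y) = -g(\ad_\th^{\rm sym}\,x,y)$; under the identification of symmetric bilinear forms with $g$-self-adjoint operators this reads $(\cn{g}\th)^{\rm sym} = -\ad_\th^{\rm sym}$.

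For the codifferential, using the same identity and a $g$-orthonormal basis $\{e_i\}$ of $\mg$, together with $\th([e_i,e_i])=0$, one obtains
\[
\de^g\th = -\sum_{i=1}^n (\cn{g}_{e_i}\th)(e_i) = \sum_{i=1}^n g(\ad_\th^{\rm sym}\,e_i,e_i) = \tr(\ad_\th^{\rm sym}) = \tr\ad_\th,
\]
where the last equality uses $\tr\ad_\th^* = \tr\ad_\th$. Substituting the two formulas $(\cn{g}\th)^{\rm sym} = -\ad_\th^{\rm sym}$ and $\de^g\th = \tr\ad_\th$ into \eqref{eq:ricrelwe} and regrouping the multiples of $g$ yields exactly \eqref{eq:Ricgen}. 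There is no real difficulty here beyond the careful sign bookkeeping when transferring $\ad_\th^*$ between arguments; once the Koszul formula is set up correctly, both reductions are one-line calculations.
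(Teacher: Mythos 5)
Your proof is correct and follows essentially the same route as the paper: both start from the reformulation \eqref{eq:ricrelwe} of the Weyl--Einstein condition and use the Koszul formula \eqref{eq:Koszullie} to obtain $(\cn{g}\th)^{\rm sym}=-\ad_\th^{\rm sym}$ and $\de^g\th=\tr\ad_\th$, then substitute. Your version is just slightly more explicit (computing the full bilinear form $\cn{g}\th$ and identifying its skew part) than the paper's one-line quadratic-form argument, but the content is the same.
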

\begin{proof}
Using \eqref{eq:Koszullie} we get, for every $x\in\mg$,
\begin{equation}
g(\cn{g}_x \th,x) = \frac{1}{2}g\bigl([x,\th] - \ad_\th^*x - \ad_x^*\th,x\bigr)=g([x,\th],x),
\label{eq:nabth}
\end{equation} 
whence $(\cn{g} \th)^{\rm{sym}}=-\ad_\th^{\rm{sym}}$ and by taking the trace, $\delta^g\th=\tr\ad_\th$. The lemma thus follows directly from \eqref{eq:ricrelwe}.
\end{proof}

\section{Weyl-Einstein structures on compact solvmanifolds}\label{ch:solv}

We are now ready to study the question of the existence of Weyl-Einstein structures on compact conformal solvmanifolds.

Consider a solvable Lie group $G$, which admits a discrete subgroup $\Gamma$ such that the quotient manifold $M\coloneqq\Gamma\backslash G$ is compact. In particular $G$ is unimodular \cite{Mil76}.

Any left-invariant metric $g$ on $G$ defines a Riemannian metric $\bar g$ on the quotient manifold $M$, in a way that the natural projection $\pi:(G,g)\to (M,\bar g)$ is a local isometry. We shall denote by $\bar c$ the conformal class of $\bar g$ on $M$.

We say that $(M,\bar g)$ is a Riemannian solvmanifold and that $(M,\bar c)$ is a conformal solvmanifold, both defined by $(G,g)$. In the particular case where $G$ is nilpotent, $(M,\bar g)$ will be called nilmanifold instead of solvmanifold.

The goal of this section is to show that conformal solvmanifolds admitting Weyl-Einstein structures  arise only from unimodular solvable Lie groups admitting Einstein metrics.

\begin{theorem}\label{th:WEiffE}
Let $(M,\bar c)$ be a conformal solvmanifold defined by $(G,g)$. Then $(M,\bar c)$ admits a Weyl-Einstein structure if and only if $(G,g)$ is Einstein.
\end{theorem}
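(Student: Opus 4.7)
The reverse implication is immediate: if $(G,g)$ is Einstein, then the quotient metric $\bar g$ on $M$ is also Einstein (since $G\to M$ is a local isometry), and by Remark~\ref{rem:EvsWE}(1) its Levi-Civita connection is a Weyl-Einstein structure on $(M,\bar c)$.

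For the forward implication, suppose $(M,\bar c)$ carries a Weyl-Einstein structure $\cn{}$. The plan is to select the Gauduchon metric $g_0\in\bar c$ via Theorem~\ref{th:gaudmet}, with Lee form $\th_0$ satisfying $\de^{g_0}\th_0=0$, and to invoke Lemma~\ref{lem:threerels} to obtain a constant $K$ with
\begin{equation*}
\scal{g_0}=K+(n+2)\|\th_0\|^2_{g_0},\qquad \scal{\cn{}}_{g_0}=K-n(n-4)\|\th_0\|^2_{g_0},
\end{equation*}
together with the eigenform equation $\De^{g_0}\th_0=\tfrac{2}{n}\scal{\cn{}}_{g_0}\th_0$. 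The aim is to prove $\th_0\equiv 0$, which makes $\cn{}=\cn{g_0}$ and $g_0$ Einstein, and then to identify $g_0$ with a constant multiple of $\bar g$. The decisive solvmanifold input is that $\bar g$ descends from a left-invariant metric on the unimodular solvable group $G$, so $\scal{\bar g}=\scal{g}$ is a \emph{constant}. Combined with the classical Milnor-type inequality $\scal{g}\leq 0$ on unimodular solvable Lie groups (with equality exactly in the flat, hence Einstein, case), one may assume $\scal{\bar g}<0$; this forces the Yamabe invariant $Y(M,\bar c)\leq\scal{\bar g}\cdot\operatorname{vol}(M,\bar g)^{2/n}<0$, so by the Aubin--Schoen sign classification $\scal{g_0}$ is negative at some point. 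Since $\scal{g_0}\geq K$ pointwise, this gives $K<0$.

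For $n\geq 4$ the second identity then yields $\scal{\cn{}}_{g_0}\leq K<0$ throughout $M$, so Lemma~\ref{lem:gauduchonexact} applies directly and provides $\th_0\equiv 0$ with $g_0$ Einstein. For $n=3$ the coefficient of $\|\th_0\|^2$ in $\scal{\cn{}}_{g_0}$ changes sign, so the pointwise bound is unavailable; here one combines the integral identity $\int_M\|d\th_0\|^2_{g_0}\,{\rm d}v_{g_0}=\tfrac{2}{3}\int_M\scal{\cn{}}_{g_0}\|\th_0\|^2_{g_0}\,{\rm d}v_{g_0}$ from Lemma~\ref{lem:threerels} with sharper Yamabe-type information on the distribution of $\scal{g_0}$ within $\bar c$ to force $\th_0\equiv 0$. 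Once this is established, $g_0$ is an Einstein metric of constant scalar curvature inside a conformal class of non-positive Yamabe type, and therefore coincides (up to a constant) with $\bar g$ by the standard uniqueness of constant-scalar-curvature representatives in such classes. Consequently $\bar g$ is Einstein, and lifting through the local isometry $G\to M$ gives that $(G,g)$ is Einstein.

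The main obstacle is the three-dimensional case, where the control of $\scal{\cn{}}_{g_0}$ afforded by the identity $\scal{\cn{}}_{g_0}=K-n(n-4)\|\th_0\|^2$ is no longer sufficient and a finer combination of Lemma~\ref{lem:threerels} with conformal scalar-curvature estimates is required. A secondary but non-trivial ingredient is the inequality $\scal{g}\leq 0$ for unimodular solvable Lie groups, which is not stated earlier in the excerpt; it can be obtained directly from the Lie-algebraic Ricci formula~\eqref{eq:PBZ} together with the unimodularity condition $z=0$.
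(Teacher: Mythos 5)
Your overall strategy is the same as the paper's: pass to the Gauduchon metric $g_0$ (Theorem~\ref{th:gaudmet}), use Lemma~\ref{lem:threerels} to produce the constant $K$, argue $K<0$ from the constancy and negativity of $\scal{\bar g}$ (Milnor), deduce $\scal{\cn{}}_{g_0}<0$ so that Lemma~\ref{lem:gauduchonexact} gives $\th_0\equiv 0$ and $g_0$ Einstein, and finally show $\bar g$ is a constant multiple of $g_0$. You differ only in two implementation steps, where you invoke heavier conformal machinery: you get $K<0$ from the sign of the Yamabe invariant and the Aubin--Schoen trichotomy, whereas the paper simply writes $\bar g=e^{2f}g_0$, evaluates the conformal transformation law \eqref{eq:scalrel2} at a maximum of $f$ (where $\De^{g_0}f\ge 0$ and ${\rm d}f=0$) and reads off $K<0$; and you conclude $f$ is constant from uniqueness of constant-scalar-curvature representatives in a negative conformal class, whereas the paper compares \eqref{eq:scalrel2} at a maximum and a minimum of $f$ to force $f(x_0)=f(x_1)$. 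Both of your substitutes are correct, just less elementary. A minor inaccuracy: your suggested derivation of $\scal{g}\le 0$ from \eqref{eq:PBZ} with $z=0$ does not go through as stated, since the Killing-form term $-\tfrac12\tr B$ can be positive for solvable algebras (e.g.\ $\mathfrak{so}(2)\ltimes\R^2$); the correct reference is Milnor's Theorem 3.1, which the paper cites and which holds for all solvable Lie groups, giving strictly negative scalar curvature unless the metric is flat (the flat case being trivially Einstein).

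The genuine gap is the three-dimensional case, which you explicitly leave open. Your passage from $K<0$ to $\scal{\cn{}}_{g_0}<0$ uses \eqref{eq:threerel2} in the form $\scal{\cn{}}_{g_0}=K-n(n-4)\|\th_0\|^2_{g_0}$ and therefore needs $n\ge 4$; for $n=3$ one has $\scal{\cn{}}_{g_0}=K+3\|\th_0\|^2_{g_0}$ and the sign is no longer controlled by $K$ alone. Your proposed remedy (combining the integral identity from \eqref{eq:threerel3} with ``sharper Yamabe-type information'') is not an argument: the identity $\int_M\|{\rm d}\th_0\|^2_{g_0}\,{\rm d}v=\tfrac23\int_M(K+3\|\th_0\|^2_{g_0})\|\th_0\|^2_{g_0}\,{\rm d}v$ has the wrong sign structure in its quartic term and does not by itself force $\th_0\equiv 0$, so as written the theorem is only proved for $n\ge 4$. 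It is worth noting that the paper's proof passes from $K<0$ to $\scal{\cn{}}_{g_0}<0$ ``by \eqref{eq:threerel2}'' without distinguishing dimensions, so your hesitation points at a step that the paper treats rather tersely; nevertheless, flagging a difficulty is not the same as resolving it, and your proposal is incomplete precisely there.
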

\begin{proof}

Suppose that $(M,\bar c)$ admits a Weyl-Einstein structure $\nabla$. If $(G,g)$ is flat, then it is in particular Einstein so there is nothing to prove. 

Assume that $(G,g)$ is not flat. By \cite[Theorem 3.1]{Mil76}, $(G,g)$ has (constant) strictly negative scalar curvature:
\begin{equation}\label{eq:scalgneg}
\scal{g} <0.
\end{equation}

Let $\bar g\in \bar c$ denote the metric on $M$ induced by $g$. We denote by $g_0\in\bar c$ the Gauduchon metric associated to $\cn{}$, according to Theorem \ref{th:gaudmet}. We shall first prove that $g_0$ is actually an Einstein metric.

Since $\bar g,g_0\in \bar c$, there exists $f \in \Cc^{\infty}(M)$ such that $\bar g = e^{2f}g_0$.  The scalar curvatures of $\bar g$ and $g_0$ are linked by the formula (cf. \cite[\S 1.159]{Besse2008})
\begin{equation}\label{eq:scalrel2}
	\begin{aligned}
		 \scal{\bar g} = e^{-2f}\bigl(\scal{g_0} + 2(n-1)\De^{g_0}f - (n-1)(n-2)\|{\rm d}f\|_{g_0}^2\bigr).
	\end{aligned}
\end{equation}
Hence, if $K$ denotes the constant in (\ref{eq:threerel1}), we have
\begin{equation}
	\begin{aligned}\label{eq:transition}
		 \scal{\bar g} = e^{-2f}\bigl(K + (n+2)\|\th\|_{g_0}^2 + 2(n-1)\De^{g_0}f - (n-1)(n-2)\|{\rm d}f\|_{g_0}^2\bigr).
	\end{aligned}
\end{equation}
Notice that $\scal{\bar g}$ is a negative constant because it is induced by $g$, which satisfies \eqref{eq:scalgneg}.

Since $M$ is compact, $f$ has a maximum at some point $x_0 \in M$, and of course $\|{\rm d}f\|^2_{g_0}(x_0)=0$. Evaluating (\ref{eq:transition}) at $x_0$ yields
\begin{equation}
	\begin{aligned}
		 K = e^{2f(x_0)}\scal{\bar g} - (n+2)\|\th\|_{g_0}^2(x_0) - 2(n-1)\De^{g_0}f(x_0).
	\end{aligned}
\end{equation}
We know that $\scal{\bar g} < 0$ and $\De^{g_0}f(x_0) \geq 0$, since $x_0$ is a maximum of $f$. Hence, $K < 0$ and thus, by (\ref{eq:threerel2}), we conclude
\begin{equation}
	\scal{\cn{}}_{g_0} < 0.
\end{equation}
Therefore, Lemma~\ref{lem:gauduchonexact} implies that the Weyl-Einstein structure $\cn{}$ is exact and $g_0$ is an Einstein metric with Levi-Civita connection $\cn{}$. Recall that any Einstein metric has constant scalar curvature, so in particular $\scal{g_0}$ is constant.

To finish the proof, we will show that $\bar g$ must be Einstein as well.

 Again, the compactness of $M$ allows us to find $x_1 \in M$ such that $f$ has a minimum in $x_1$. If $x_0$ is as above, we know that $\|{\rm d}f\|^2_{g_0}(x_0) = \|{\rm d}f\|^2_{g_0}(x_1) = 0$ and $ \De^{g_0} f(x_1)\leq 0\leq \De^{g_0} f(x_0) $. 
These facts together with \eqref{eq:scalrel2} and $\scal{\bar g} < 0$ imply
\begin{equation}\label{eq:scl}
    \begin{aligned}
        \scal{g_0} + 2(n-1)\De^{g_0} f(x_1) \leq \scal{g_0} + 2(n-1)\De^{g_0} f (x_0) < 0.
    \end{aligned}
\end{equation}
In addition, $e^{-2f(x_0)} \leq e^{-2f(x_1)}$ so, being both terms in \eqref{eq:scl} negative, and using \eqref{eq:scalrel2}, we get
\begin{eqnarray*}
 \scal{\bar g}&=&   e^{-2f(x_1)}\bigl(\scal{g_0} + 2(n-1)\De^{g_0} f(x_1)\bigr)\\ &\leq&
 e^{-2f(x_0)}\bigl(\scal{g_0} + 2(n-1)\De^{g_0} f(x_1)\bigr)\\ &\leq& 
   e^{-2f(x_0)}\bigl(\scal{g_0} + 2(n-1)\De^{g_0} f (x_0)\bigr)=\scal{\bar g}.
\end{eqnarray*}
This implies already $f(x_0) =f (x_1)$ and thus $f$ is constant. Hence $\bar g$, being a constant multiple of $g_0$, turns out to be an Einstein metric as well. Therefore, $g=\pi^*\bar g$ is a left-invariant Einstein metric since $\pi$ is a local isometry.

The converse statement is trivial.
\end{proof}

Notice that a solvable unimodular Riemannian Lie group is Einstein if and only if it is flat. Indeed, any solvable Riemannian Lie group has constant non-positive scalar curvature \cite[Theorem 3.1]{Mil76}. Thus, if it is Einstein, the Ricci tensor is a non-positive multiple of the identity. However, by \cite[Corollary 3.3]{Do82} it cannot be a strictly negative multiple of the identity. Hence the solvable Riemannian Lie group must be Ricci-flat and hence flat by \cite{AlKi75}.

Consequently, one can restate the previous result as follows:

\begin{co}\label{cor:42}
Let $(M,\bar c)$ be a conformal solvmanifold defined by $(G,g)$.  Then $(M,\bar c)$ admits a Weyl-Einstein structure if and only if $(G,g)$ is flat.
\end{co}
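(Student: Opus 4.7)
The plan is to deduce the corollary directly from Theorem~\ref{th:WEiffE}, which states that $(M,\bar c)$ admits a Weyl-Einstein structure if and only if $(G,g)$ is Einstein. It therefore suffices to prove that, for the solvable Riemannian Lie group $(G,g)$ appearing in the hypothesis, being Einstein is equivalent to being flat. The implication ``flat $\Rightarrow$ Einstein'' is immediate, so only the converse requires argument.

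For the converse, I would assemble three classical rigidity results. First, since $M=\Gamma\backslash G$ is compact, $G$ is necessarily unimodular, as recalled in \cite{Mil76}. Second, Milnor's theorem \cite[Theorem 3.1]{Mil76} asserts that every left-invariant metric on a solvable Lie group has non-positive constant scalar curvature; hence if $(G,g)$ is Einstein, the Einstein constant $\scal{g}/n$ is at most zero. Third, Dotti's rigidity result \cite[Corollary 3.3]{Do82} forbids a strictly negative Einstein constant on a unimodular solvable Lie group, which forces $\scal{g}=0$. Thus $(G,g)$ is Ricci-flat, and the Alekseevsky--Kimelfeld theorem \cite{AlKi75} finally concludes that any Ricci-flat left-invariant metric on a solvable Lie group must be flat.

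The essential content of this short argument lies in the identification of the correct chain of classical results rather than in any new computation; the only point needing verification is that the unimodularity hypothesis of Dotti's theorem is in force, and this is exactly guaranteed by the existence of a discrete co-compact subgroup $\Gamma\subset G$. Consequently no genuine obstacle appears, and the corollary follows by combining the above chain with Theorem~\ref{th:WEiffE}.
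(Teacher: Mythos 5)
Your argument is correct and coincides with the paper's own reasoning: the paper likewise deduces the corollary from Theorem~\ref{th:WEiffE} by combining Milnor's non-positivity of the scalar curvature, Dotti's exclusion of a strictly negative Einstein constant on unimodular solvable groups (unimodularity coming from the existence of the lattice $\Gamma$), and the Alekseevsky--Kimelfeld theorem to pass from Ricci-flat to flat. Nothing is missing.
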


Recall that left-invariant metrics on non-abelian nilpotent Lie groups cannot be flat \cite{Mil76}. So we can state a second consequence of Theorem \ref{th:WEiffE}.

\begin{co}A conformal nilmanifold admits Weyl-Einstein structures if and only if it is a flat torus.
\end{co}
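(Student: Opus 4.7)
The plan is to apply Corollary~\ref{cor:42} to reduce the statement to a purely Lie-theoretic question about flatness, and then invoke Milnor's classical result to identify the only possibility.

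More precisely, a conformal nilmanifold $(M,\bar c)$ is by definition a conformal solvmanifold $(M,\bar c)$ defined by $(G,g)$ with $G$ nilpotent. Corollary~\ref{cor:42} therefore says that $(M,\bar c)$ admits a Weyl-Einstein structure if and only if the left-invariant metric $g$ on $G$ is flat. Now, the result of Milnor~\cite{Mil76} recalled in the remark preceding the corollary asserts that a left-invariant metric on a non-abelian nilpotent Lie group cannot be flat. Hence if $(M,\bar c)$ admits a Weyl-Einstein structure, $G$ must be abelian, so $G\simeq\R^n$ (being simply connected) equipped with a constant inner product, and $M=\Gamma\backslash G$ with $\Gamma$ a discrete co-compact subgroup of $\R^n$, i.e.\ a lattice. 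Therefore $(M,\bar g)$ is a flat torus.

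For the converse, a flat torus is the quotient of $\R^n$ (a nilpotent, in fact abelian, Lie group) by a lattice, endowed with the conformal class of the flat metric; this metric is Einstein (being Ricci-flat), so by Remark~\ref{rem:EvsWE}(1) its Levi-Civita connection is a Weyl-Einstein structure on $(M,\bar c)$.

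There is essentially no obstacle here: both implications are immediate once Corollary~\ref{cor:42} and Milnor's flatness criterion for nilpotent Lie groups are in hand. The only point worth stating carefully is that ``flat torus'' should be understood as a quotient of $\R^n$ by a lattice with its induced flat conformal structure, so that the converse direction is indeed trivial.
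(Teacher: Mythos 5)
Your proposal is correct and follows essentially the same route as the paper: it combines Corollary~\ref{cor:42} (existence of a Weyl-Einstein structure forces $(G,g)$ to be flat) with Milnor's result that left-invariant metrics on non-abelian nilpotent Lie groups are never flat, so $G$ is abelian and $M$ is a flat torus, the converse being immediate since the flat metric is Einstein. The extra details you supply (identifying $\Gamma\backslash\R^n$ as a torus and invoking Remark~\ref{rem:EvsWE}) are exactly what the paper leaves implicit.
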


\section{Weyl-Einstein structures on nilpotent Lie groups}\label{ch:einstein}

In this section we drop the compactness assumption, and turn our attention to left-invariant Weyl structures on nilpotent conformal  Lie groups. We show that conformal nilpotent non-abelian Lie groups do not admit left-invariant Weyl-Einstein structures. 

Let $G$ be a nilpotent non-abelian Lie group of dimension $n\geq 3$ with Lie algebra $\mg$. Let $\mg'$ denote the commutator of $\mg$, namely $\mg'=[\mg,\mg]$, and set $\mg'':=[\mg,\mg']$. Since $\mg$ is nilpotent and non-abelian, one has $0\neq \mg'\supsetneq \mg''$. In particular, this implies that $\dim (\mg')^\bot=\dim \mg-\dim \mg'\geq 2$. Indeed, if one supposes that $\mg=\R x\oplus \mg'$ for some $x\in\mg$, then 
\[
\mg'=[\mg,\mg]=[\R x\oplus \mg',\R x\oplus \mg']\subset \mg '',
\]leading to a contradiction. Hence $\dim (\mg')^\bot\geq 2$.

Notice that for any $x\in\mg$, $\ad_x$ is a nilpotent endomorphism because $\mg$ is a nilpotent Lie algebra. This implies that both the element $z$ and the Killing form $B$ of $\mg$ appearing in \eqref{eq:PBZ} vanish. Therefore, $\ric{\nabla^g}=P$, where $P$ is given in \eqref{eq:Ric4}, and one can easily show that $\mg$ is not Einstein, if it is not abelian. 

In fact, since $\mg$ is nilpotent, there exist nonzero elements $x\in (\mg')^\bot$, for which the second term of $\ric{\nabla^g}(x,x)$ in \eqref{eq:Ric4} vanishes and thus $\ric{\nabla^g}(x,x)\leq0$. If in addition $\mg$ is not abelian, the center $\mz$ of $\mg$ has non-trivial intersection with the commutator $\mg'$. Hence, for every $0\neq y\in\mz\cap\mg'$, the first term of $\ric{\nabla^g}(y,y)$ in \eqref{eq:Ric4} vanishes, yielding $\ric{\nabla^g}(y,y)>0$. So $\ric{\nabla^g}$ cannot be a multiple of the metric and thus $(G,g)$ is not Einstein.

Using similar arguments while working with the Weyl-Einstein condition as described in Lemma \ref{lem:Ricgen}, we get the following:

\begin{pr} \label{pro:nilWE} Let $G$ be a non-abelian nilpotent Lie group. For any left-invariant conformal structure $(G,c)$, and any left-invariant Weyl structure $\nabla$ on it, $\nabla$ is not Weyl-Einstein.
\end{pr}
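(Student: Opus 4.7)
The plan is to derive a contradiction from the Weyl--Einstein equation \eqref{eq:Ricgen} by evaluating it on two carefully chosen vectors, extending to the Weyl setting the Wolf--Milnor sign argument sketched in the paragraph preceding the proposition. The main difficulty to overcome is that the Weyl--Einstein condition, compared to the Einstein one, contains the extra terms $\ad_\theta^{\mathrm{sym}}$ and $\theta\otimes\theta$; the key observation is that both terms can be made to vanish on carefully chosen diagonal entries.

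First I would note that, since $\mg$ is nilpotent, $\ad_\theta$ is nilpotent and hence $\tr\ad_\theta=0$. So for any $v\in\mg$, \eqref{eq:Ricgen} reduces to
\begin{equation}\label{eq:plan1}
\ric{\nabla^g}(v,v)=\lambda\,\|v\|_g^2-(n-2)\bigl[g(\ad_\theta^{\mathrm{sym}}v,v)+\theta(v)^2\bigr],\qquad \lambda:=\tfrac{1}{n}\bigl(\scal{g}+(n-2)\|\theta\|_g^2\bigr).
\end{equation}
I would then record two diagonal vanishings of $\ad_\theta^{\mathrm{sym}}$: (i) if $v\in(\mg')^\perp$, then $[\theta,v]\in\mg'\perp v$, so $g(\ad_\theta v,v)=0$, and the adjoint term is handled analogously; (ii) if $v\in\mz$, then $[\theta,v]=0$, so again $g(\ad_\theta v,v)=0$ and the symmetric part vanishes.

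Next, pick $0\neq y\in\mz\cap\mg'$, which exists because the last nonzero term of the lower central series of $\mg$ is contained in $\mz\cap\mg'$. As explained before the proposition, $\ric{\nabla^g}(y,y)=P(y,y)>0$: the first sum in \eqref{eq:Ric4} vanishes since $y\in\mz$, while the second is strictly positive because $y\in\mg'$ and $y\notin(\mg')^\perp$. Applying \eqref{eq:plan1} with $v=y$ and using vanishing (ii),
\begin{equation*}
0<\ric{\nabla^g}(y,y)=\lambda\,\|y\|_g^2-(n-2)\theta(y)^2,
\end{equation*}
which forces $\lambda>0$.

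Finally, I would use the dimension bound $\dim(\mg')^\perp\geq 2$ recalled at the beginning of the section. This guarantees that $(\mg')^\perp\cap\ker\theta$ has dimension at least one, so we can pick $0\neq x$ in this intersection. Then $\theta(x)=0$, and \eqref{eq:plan1} together with vanishing (i) gives $\ric{\nabla^g}(x,x)=\lambda\|x\|_g^2>0$, contradicting the fact --- also recalled in the paragraph preceding the proposition --- that $\ric{\nabla^g}(x,x)=P(x,x)\leq 0$ for every $x\in(\mg')^\perp$, since the second term in \eqref{eq:Ric4} vanishes and the first is non-positive. The only point that needed real care was isolating vectors on which both $\ad_\theta^{\mathrm{sym}}$ and $\theta\otimes\theta$ drop out; the dimension inequality $\dim(\mg')^\perp\geq 2$, combined with the choice of $x$ in $\ker\theta$, is exactly what makes this possible.
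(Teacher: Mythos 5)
Your proposal is correct and follows essentially the same argument as the paper: after noting $\tr\ad_\theta=0$, both proofs test the Weyl--Einstein identity \eqref{eq:Ricgen} on a nonzero vector in $(\mg')^\perp\cap\theta^\perp$ (where the quantity $\scal{g}+(n-2)\|\theta\|_g^2$ is forced to be $\leq 0$) and on a nonzero vector in $\mz\cap\mg'$ (where it is forced to be $>0$), using the sign properties of $P$ from \eqref{eq:Ric4}. The only difference is the order in which the two inequalities are derived, which is immaterial.
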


\begin{proof} Assume for a contradiction that $\nabla$ is a left-invariant Weyl-Einstein structure on the conformal Lie group $(G,c)$, and let  $\theta$ denote the left-invariant Lee form of $\nabla$ corresponding to a left-invariant metric $g\in c$. Notice that $\tr\ad_\th=0$ because $\mg$ is nilpotent (here we identify $\th\in \mg^*$ with its metric dual $\th\in\mg$ via $g$). Hence, by Lemma \ref{lem:Ricgen}, for any $x\in\mg$ we have
\begin{eqnarray}\label{eq:ig4}
\ric{\nabla^g}(x,x)
=\frac1{n}\left(\scal{g}  +(n-2)\|\th\|_g^2\right) \|x\|^2_g+(n-2)\left[g([x,\theta],x) -g(\theta,x)^2\right]
.
\end{eqnarray}

Since $\dim (\gf')^\bot\geq 2$, there exists a unit vector $x\in(\gf'+\R \theta)^\bot$, for which \eqref{eq:ig4} becomes
\begin{eqnarray}\nonumber
\ric{\nabla^g}(x,x)
&=&\frac1{n}\left(\scal{g}  +(n-2)\|\th\|_g^2\right).
\end{eqnarray}Moreover, from \eqref{eq:Ric4} we can easily see that $\ric{\nabla^g}(x,x)=P(x,x)\leq 0$ because $x\bot\mg'$. Therefore, we get
\begin{equation}\label{eq:des}
\scal{g}  +(n-2)\|\th\|_g^2\leq 0.
\end{equation}

In addition, for any unit vector $y\in\mathfrak z\cap \gf'$, \eqref{eq:ig4} gives
\[
\ric{\nabla^g}(y,y)+(n-2)\theta(y)^2=\frac1{n}\left(\scal{g}  +(n-2)\|\th\|_g^2\right) 
,\] but \eqref{eq:Ric4} gives $\ric{\nabla^g}(y,y)=P(y,y)>0$. So we get $\scal{g}  +(n-2)\|\th\|_g^2 >0$, contradicting \eqref{eq:des}.
\end{proof}

\section{Weyl-Einstein structures on almost abelian solvable Lie groups}\label{ch:aa}

In order to construct non-trivial examples of left-invariant Weyl-Einstein structures on conformal Lie groups, we will now consider the more general setting of solvable Lie groups. However, since in full generality the problem is too involved, we will restrict our attention to the almost abelian case.

Recall that a Lie algebra  is called almost abelian if it has an abelian ideal of codimension 1 and, accordingly, any Lie group whose Lie algebra is almost abelian, is called an almost abelian Lie group. Of course, almost abelian Lie algebras are 2-step solvable.

Let $(G,c)$ be an almost abelian conformal Lie group of dimension $n\geq 3$ and $g\in c$ a left-invariant metric. Let $\mg$ denote the Lie algebra of $G$ and let $\mh$ be a codimension 1 abelian ideal in $\mg$. Notice that $\mh$ is unique, unless $\mg$ is abelian or isomorphic to a direct sum of an abelian Lie algebra and the Heisenberg Lie algebra of dimension 3 (cf. proof of \cite[Proposition 1]{Fr12}). 

Using the chosen metric $g$ on $\mg$, consider a unit vector $b\in \mh^\bot$, so that $\mg$ decomposes as the orthogonal direct sum $\mg=\R b\oplus \mh$. Since $\mh$ is an abelian ideal, the Lie algebra structure of $\mg$ is determined by the adjoint map $\ad_b$, which preserves $\mh$. In particular, $\mg$ is nilpotent if and only if $\ad_b$ is a nilpotent endomorphism of $\mh$.

Let $A$ and $S$ be the skew-symmetric and symmetric parts (with respect to $g$) of $\ad_b|_\mh:\mh\to\mh$, respectively. We will say that $A$ and $S$ are the endomorphisms associated to $(\mg,g)$, understanding that they are defined up to the choice of a unit vector $b$ whose orthogonal is the abelian ideal $\mh$.

The Levi Civita connection of $(\mg,g)$ is given by \eqref{eq:Koszullie} and verifies
\begin{equation}
\label{eq:LCaa}
\nabla^g_b b=0,\quad \nabla^g_bu=Au,\quad \nabla^g_ub=-Su,\quad \nabla^g_uv=\lela Su,v\rira b,\qquad \forall u,v\in\mh.
\end{equation}Using these formulas, it is straightforward that the curvature tensor \eqref{eq:riemc} verifies,  for $u,v,w\in\mh$, 
\begin{eqnarray}
R^g(u,v)w&=&
\lela Sv,w\rira Su-\lela Su,w\rira Sv,\label{eq:Raa}\\
R^g(u,v)b&=&0,\nonumber\\
R^g(b,v)w
&=&\lela (S^2- [A,S])v,w\rira b,\nonumber\\
R^g(b,v)b&=&
[A,S]v-S^2 v.\nonumber
\end{eqnarray}
Therefore, one can easily check that the Ricci tensor takes the form 
\begin{equation}\label{eq:ricaa}
\ric{\nabla^g}=-\tr (S^2)b\otimes b+[A,S]-\tr S \cdot S.
\end{equation} In particular $\ric{\nabla^g}$ preserves $\R b$ and $\mh$, and the scalar curvature of $(\mg,g)$ is 
\begin{equation}
\label{eq:scalaa}
\scal{g}=-\tr(S^2)-(\tr S)^2.
\end{equation}

Notice that when $S=k\Id_{\mh}$ for some $k\in\R$, then \eqref{eq:ricaa} implies that $\ric{\nabla^g}=-k^2(n-1)g$, and thus $(G,g)$ is Einstein. In fact, the converse also holds:

\begin{lem}\label{re:Sid}
If $(G,g)$ is Einstein, then $S$ is a multiple of the identity. 
\end{lem}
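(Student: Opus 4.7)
The plan is to exploit the Einstein condition componentwise on the orthogonal decomposition $\mg = \R b \oplus \mh$, reduce to a scalar identity about $S$, and then invoke the equality case of Cauchy--Schwarz.

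First, I would write the Einstein condition $\ric{\nabla^g} = \lambda g$ with $\lambda$ the Einstein constant, and substitute the expression \eqref{eq:ricaa}. Since $\ric{\nabla^g}$ as given preserves the decomposition $\mg = \R b \oplus \mh$, comparing the $b \otimes b$ component yields $\lambda = -\tr(S^2)$, while the restriction to $\mh$ gives the endomorphism identity
\begin{equation*}
[A,S] - (\tr S)\, S = -\tr(S^2)\, \Id_\mh.
\end{equation*}

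Next, I would take the trace of this identity on $\mh$ (which has dimension $n-1$). Since $\tr[A,S] = 0$, one obtains
\begin{equation*}
-(\tr S)^2 = -(n-1)\tr(S^2), \qquad \text{i.e.}\qquad (\tr S)^2 = (n-1)\,\tr(S^2).
\end{equation*}
Because $S$ is $g$-symmetric, it is diagonalizable with real eigenvalues $s_1,\dots,s_{n-1}$, and the Cauchy--Schwarz inequality gives $(\sum s_i)^2 \le (n-1)\sum s_i^2$, with equality if and only if all $s_i$ coincide. Thus $S$ is a real multiple of $\Id_\mh$, which is the claim.

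I do not expect a serious obstacle: the only subtle point is to note that $[A,S]$ is traceless (as a commutator) so the off-diagonal interaction between the skew and symmetric parts of $\ad_b$ drops out after tracing, leaving an identity purely in the symmetric invariants of $S$. Everything else is an immediate consequence of \eqref{eq:ricaa} and the equality case of Cauchy--Schwarz.
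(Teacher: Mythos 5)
Your proof is correct, and its closing step is genuinely leaner than the paper's. Both arguments start identically: substitute \eqref{eq:ricaa} into $\ric{\nabla^g}=\lambda g$, read off $\lambda=-\tr(S^2)$ from the $b\otimes b$ component, restrict to $\mh\otimes\mh$, and take the trace to get $(\tr S)^2=(n-1)\tr(S^2)$ (using $\tr[A,S]=0$). From there the paper keeps working with the full endomorphism identity on $\mh$: it splits $S=\frac{\tr S}{n-1}\Id_\mh+S_0$ with $S_0$ trace-free, reduces the $\mh$-equation to $[A,S_0]=(\tr S)\,S_0$, and then argues (via the orthogonality of a commutator with a skew-symmetric operator against $S_0$) that either $S_0=0$ or $\tr S=0$, the latter forcing $S=0$ through the trace identity. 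You instead discard the endomorphism equation entirely after tracing and observe that for a $g$-symmetric $S$ on the $(n-1)$-dimensional space $\mh$, the identity $(\tr S)^2=(n-1)\tr(S^2)$ is exactly the equality case of Cauchy--Schwarz applied to the eigenvalue vector and $(1,\dots,1)$, which forces all eigenvalues to coincide (possibly all zero). This buys you two things: you never need to analyze the interaction term $[A,S]$ beyond its tracelessness, and you avoid the slightly delicate step in the paper where $[A,S_0]=(\tr S)S_0$ is used to kill $S_0$. The paper's route, on the other hand, extracts the extra relation $[A,S_0]=(\tr S)S_0$, which is of the same flavour as the computations reused later in Theorem \ref{pro:aa}; but for the lemma as stated, your argument is complete and more elementary.
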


\begin{proof} Indeed, if $\ric{\nabla^g}=\lambda g$ for some $\lambda\in \R$, then contracting Equation \eqref{eq:ricaa} with $b$ twice, we get $\lambda=-\tr(S^2)$. Moreover, projecting Equation \eqref{eq:ricaa} onto $\mh\otimes \mh$  and using the Einstein condition together with $\lambda=-\tr(S^2)$, we further obtain 
\begin{eqnarray}\label{eq:bb}
-\tr(S^2) \, g|_\mh &=&[A,S]-\tr S\cdot S.
\end{eqnarray}
Taking trace on both sides, this equation implies
\begin{equation}\label{eq:tt}
\tr(S^2)(n-1)=(\tr S)^2.
\end{equation}
Furthermore, writing  $S=\frac{\tr S}{(n-1)}\id_\mh+S_0$, where $S_0$ is trace-free, and using  \eqref{eq:tt},  \eqref{eq:bb} becomes
\begin{eqnarray}
\, [A,S_0]&=&\tr S\cdot S_0.
\end{eqnarray}
This implies that either $\tr S=0$ or $S_0=0$. The latter clearly yields $S=k \Id_\mh$ for some $k\in\R$. Since $n>2$, the former case together with \eqref{eq:tt} imply $\tr(S^2)=0$ and therefore $S=0$. 
\end{proof}

The next theorem generalizes Lemma \ref{re:Sid} and gives necessary and sufficient conditions for a conformal almost abelian Lie group $(G,c)$ to admit a left-invariant Weyl-Einstein structure, in terms of the endomorphisms $A$ and $S$ associated to $(\mg,g)$.

\begin{theorem}\label{pro:aa} Let $G$ be an almost abelian Lie group of dimension $n\geq 3$ with Lie algebra $\mg=\R b\oplus \mh$. Let $g$ be a metric on $\mg$ with associated endomorphisms $A,S$ and let $c:=[g]$ be the conformal structure induced by $g$.
Let $\nabla$ be a left-invariant Weyl structure on $(G,c)$ and let $\th\in\mg^*$ be the (left-invariant) Lee form of $\nabla$ with respect to $g$.
Then $\nabla$ is a Weyl-Einstein structure if and only if one of the following exclusive conditions holds: 
\begin{enumerate}
\item \label{it:id} $S=k\Id_{\mh}$ for some $k\in\R$, and $\theta=0$ or $\theta=kb$, or
\item \label{it:S0} $S\neq 0$, $(\tr S)^2=\tr(S^2)(n-2)$, $[A,S]=0$ and $\theta=\frac{\tr(S)}{(n-2)}b$.
\end{enumerate}
\end{theorem}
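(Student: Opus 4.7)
The plan is to expand the Weyl--Einstein condition of Lemma~\ref{lem:Ricgen} componentwise with respect to the orthogonal splitting $\mg = \R b \oplus \mh$. Writing the Lee form as $\theta = \beta b + \tau$ with $\beta \in \R$ and $\tau \in \mh$, and using that $\mh$ is abelian with $\ad_b|_\mh = A + S$, one first computes $\tr \ad_\theta = \beta \tr S$ and writes $\ad_\theta^{\rm{sym}}$ and $\theta \otimes \theta$ in block form. Substituting these, together with the explicit expression \eqref{eq:ricaa} for the Ricci tensor, into \eqref{eq:Ricgen} and separating the three types of components yields a scalar identity from $(b,b)$, the vector identity
\[
(A+S)\tau = 2\beta\,\tau
\]
from the off-diagonal components, and an equation of self-adjoint endomorphisms of $\mh$ of the form
\[
[A, S] = \lambda\, \Id_\mh + \mu\, S - (n-2)\, \tau \otimes \tau,
\]
where $\mu := \tr S - (n-2)\beta$ and $\lambda$ is the scalar coming from the right-hand side of \eqref{eq:Ricgen}.

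The decisive step is to take the Hilbert--Schmidt inner product of this last equation with $[A,S]$ itself. Since $[A,S]$ is self-adjoint and trace-free, and $\tr([A,S]\,S) = \tr(A[S,S]) = 0$, only the rank-one term on the right contributes, giving
\[
\tr([A,S]^2) = -(n-2)\, g([A,S]\tau, \tau).
\]
Using $(A+S)\tau = 2\beta\tau$, $A^* = -A$, $S^* = S$ and $A\tau \perp \tau$, a short computation yields $g([A,S]\tau,\tau) = -2\,g(A\tau, S\tau) = 2\|A\tau\|^2$. The left-hand side is non-negative since $[A,S]$ is symmetric, and the right-hand side is non-positive since $n \ge 3$, so both must vanish: $[A,S]=0$ and $A\tau=0$. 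In particular $S\tau = 2\beta\tau$.

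With these vanishings the endomorphism equation reduces to $(n-2)\,\tau\otimes\tau = \lambda\, \Id_\mh + \mu\, S$, a rank $\le 1$ constraint. Assume for contradiction that $\tau \ne 0$. If $\mu = 0$, then $\lambda\,\Id_\mh$ would have rank $\le 1$, impossible since $\dim\mh = n-1\ge 2$ unless $\lambda = 0$, which then forces $\tau = 0$. If $\mu\ne 0$, then $S$ acts as $-\lambda/\mu$ on $\tau^\perp$ and as $2\beta$ on $\R\tau$; combining this eigenvalue structure with the definition of $\mu$, the scalar $(b,b)$-equation, and evaluating the endomorphism equation at $\tau$, elementary elimination yields
\[
\|\tau\|^2 = -\tfrac{4\beta^2}{(n-4)^2}\qquad (n \ne 4),
\]
forcing $\beta = 0$ and hence $\tau = 0$; the case $n=4$ produces the same contradiction by direct inspection. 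Hence $\tau = 0$.

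Once $\tau = 0$, the endomorphism equation reads $\lambda\,\Id_\mh + \mu\, S = 0$. If $\mu \ne 0$, then $S = k\, \Id_\mh$ for $k = -\lambda/\mu$, and reinserting into the $(b,b)$-equation gives $\beta(\beta - k) = 0$, so $\theta = 0$ or $\theta = kb$, which is case~(1). If $\mu = 0$ and $S \ne 0$, then $\lambda = 0$, the relation $\mu = 0$ yields $\beta = \tr S/(n-2)$, the $(b,b)$-equation gives $(\tr S)^2 = (n-2)\tr(S^2)$, and $[A,S] = 0$ has already been established: this is case~(2). The converse implications are a direct substitution in \eqref{eq:Ricgen} using \eqref{eq:ricaa}. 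The heart of the argument is the contraction with $[A,S]$, which combines two non-negativity conditions to simultaneously kill $[A,S]$ and $A\tau$; once that is achieved, the rest is a careful but routine case analysis.
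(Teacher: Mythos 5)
Your proposal is correct, and although it shares the paper's overall skeleton --- write $\theta=\beta b+\tau$, insert \eqref{eq:ricaa} into \eqref{eq:Ricgen}, split into the $(b,b)$, $(b,\mh)$ and $\mh\otimes\mh$ blocks, prove $\tau=0$, then run the case analysis on $S$ --- the step that eliminates $\tau$ is genuinely different. The paper contracts the $\mh\otimes\mh$ block twice with the $\mh$-part of the Lee form and completes a square, arriving at an identity of the form $(\tr T+(n-2)\mu)^2+(n-2)\tr (T_0^2)+(n-2)^2\|v\|^2=0$; you instead pair the $\mh\otimes\mh$ block with $[A,S]$ in the Hilbert--Schmidt sense, which kills $[A,S]$ and $A\tau$ simultaneously by positivity, and then exploit the rank-one identity $(n-2)\,\tau\otimes\tau=\lambda\,\Id_{\mh}+\mu S$ via an eigenvalue elimination. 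I checked your elimination: with the eigenvalues $2\beta$ on $\R\tau$ and $s:=-\lambda/\mu$ on $\tau^{\perp}$, the definition of $\mu$, the $(b,b)$-equation and the evaluation at $\tau$ give $(n-4)\beta s=(n-6)\beta^2$, and for $\beta\neq0$, $n\neq4$ one indeed finds $\|\tau\|^2=-4\beta^2/(n-4)^2$, while $\beta=0$ gives directly $\|\tau\|^2=-s^2$ and $n=4$ forces $\beta=0$; so the contradiction is complete. Your route buys two things: it establishes $[A,S]=0$ before any case distinction, and it keeps the exact off-diagonal relation $(A+S)\tau=2\beta\tau$ --- the skew part does not drop out a priori, and your identity $g([A,S]\tau,\tau)=2\|A\tau\|^2$ is precisely what handles it, whereas the paper's displayed simplification $Sv=2\mu v$ discards the $A$-contribution (this does not affect the final statement, but your version is the more careful one at that point). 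The paper's route, in exchange, reduces everything to a single scalar equation and a completed square. Two cosmetic points on your side: after $\tau=0$ you should also dispose of the subcase $\mu=0$, $S=0$ (then $\lambda=0$ and $\beta=0$, i.e.\ case (1) with $k=0$), and the exclusivity of the two cases deserves the one-line remark that $S=k\,\Id_{\mh}$ satisfies $(\tr S)^2=(n-2)\tr(S^2)$ only for $k=0$.
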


\begin{proof} Assume that $\nabla$ is Weyl-Einstein.
Consider the orthogonal decomposition $\mg=\R b \oplus \mh$ and write accordingly $\th=\mu b+v$, with $v\in \mh$ and $\mu\in\R$.
We readily compute
$$\ad_\th=\mu S-b\otimes Sv,\qquad \tr\ad_\th=\mu\tr S.$$ Replacing these terms in \eqref{eq:Ricgen}, and using \eqref{eq:ricaa} and \eqref{eq:scalaa}, we conclude that 
%the Weyl structure defined by $\th=\mu b+v$ is Weyl-Einstein if and only if 
the following equation holds
\begin{multline}\label{eq:a1}
-\tr (S^2)b\otimes b+[A,S]-\tr S \cdot S
=\frac1{n}\left(-\tr(S^2)-(\tr S)^2 +(n-2)(\mu\tr S+\mu^2+\| v\|_g^2)\right) \Id \\-(n-2)\left[\mu S-\frac12(b\otimes Sv+Sv\otimes b) +\mu^2 b\otimes b+\mu b\otimes v+\mu v\otimes b+v\otimes v\right].
\end{multline}

We first show that \eqref{eq:a1} implies $v=0$. In fact, contracting this equation with $b$ yields 
\begin{eqnarray*}-\tr (S^2)b&=&\frac1{n}\left(-\tr(S^2)-(\tr S)^2 +(n-2)(\mu\tr S+\mu^2+\| v\|_g^2)\right)b\\&&-(n-2)\left[-\frac12Sv+\mu^2b+\mu v\right].\end{eqnarray*}
Since $b$ is orthogonal to $\mh$ and $v,Sv\in\mh$, this is equivalent to the system
\begin{equation}\label{a2}Sv=2\mu v
\end{equation}
\begin{equation}\label{a3}(n-2)\mu^2-\tr (S^2)=\frac1{n}\left(-\tr(S^2)-(\tr S)^2 +(n-2)(\mu\tr S+\mu^2+\| v\|_g^2)\right).
%(n-1)\tr(S^2)=(n-1)(n-2)\mu^2+(\tr S)^2-(n-2)(\mu\tr S+\| v\|_g^2).
\end{equation}
Note that this last equation simplifies to 
\begin{equation}\label{a6}(n-1)\tr(S^2)=(n-1)(n-2)\mu^2+(\tr S)^2-(n-2)(\mu\tr S+\| v\|_g^2).
\end{equation}

Reinjecting \eqref{a3} in \eqref{eq:a1} and projecting onto $\mh\otimes \mh$ yields
\begin{equation}\label{eq:a4}
[A,S]-\tr S \cdot S
=((n-2)\mu^2-\tr (S^2))\Id_\mh-(n-2)(\mu S+v\otimes v).
\end{equation}

Contracting this last equation with $v$ and using \eqref{a2} gives
\begin{equation}\label{eq:a5}
[A,S]v-2\mu(\tr S) v
=((n-2)\mu^2-\tr (S^2))v-(n-2)(2\mu^2v+\|v\|^2 v).
\end{equation}

Assume now for a contradiction that $v\ne 0$. Clearly $g([A,S]v,v)=0$, so \eqref{eq:a1} yields
\begin{equation}\label{a7}2\mu(\tr S) 
=(n-2)\mu^2+\tr (S^2)+(n-2)\|v\|^2.\end{equation}
Let us now write $v=\|v\|e$ for some unit vector in $\mh$. By \eqref{a2}, we can write $S=2\mu e\otimes e+T$, where $T$ is a symmetric endomorphism of $e^\perp$. Moreover, we can write $T=T_0+\frac{\tr T}{n-2}\id_{e^\perp}$ with $T_0$ trace-free, and we have $\tr S=2\mu+\tr T$ and $\tr(S^2)=4\mu^2+\tr T^2=4\mu^2+\tr T_0^2+\frac{(\tr T)^2}{n-2}$. Introducing in \eqref{a7} we obtain
$$2\mu(2\mu+\tr T)=(n-2)\mu^2+4\mu^2+\tr T_0^2+\frac{(\tr T)^2}{n-2}+(n-2)\|v\|^2,$$
which can be written as 
$$(\tr T+(n-2)\mu)^2+(n-2)\tr T_0^2+(n-2)^2\|v\|^2=0.$$
This is a contradiction, thus showing that $v=0$.

Therefore, if $\nabla$ is Weyl-Einstein, then $\th=\mu b$ for some $\mu \in \R$. Using \eqref{eq:Ricgen} we get like before that $\nabla$ is a Weyl-Einstein structure if and only if \eqref{eq:a1} holds with $v=0$, that is,
\begin{multline}\label{eq:a7}
-\tr (S^2)b\otimes b+[A,S]-\tr S \cdot S
=\frac1{n}\left(-\tr(S^2)-(\tr S)^2 +(n-2)(\mu\tr S+\mu^2)\right)\Id\\-(n-2)\left[\mu S +\mu^2 b\otimes b\right].
\end{multline}
Projecting onto $\R b\otimes \R b$ and $\mh\otimes\mh$, this equation is equivalent to  the system
\begin{eqnarray}
\ -\tr (S^2)&
=&\frac1{n}\left(-\tr(S^2)-(\tr S)^2 +(n-2)(\mu\tr S+\mu^2)\right)-\mu^2(n-2)\label{eq:a8}\\
\,[A,S]&=&\frac1{n}(-\tr(S^2)-(\tr S)^2+(n-2)(\mu\tr S+\mu^2))\Id_{\mh}+(\tr S-(n-2)\mu) S.\label{eq:a9}
\end{eqnarray}
%(n-1)\tr(S^2)&=&(n-1)(n-2)\mu^2+(\tr S)^2-(n-2)\mu\tr S
Using \eqref{eq:a8} in \eqref{eq:a9} and rewriting \eqref{eq:a8}, we get that $\nabla$ is a  Weyl-Einstein structure if and only if the following system of equations holds
 \begin{eqnarray}
\tr(S^2)-\mu^2(n-2)&=&(\tr S-(n-2)\mu)\tr S/(n-1)\label{eq:a10}\\
\,[A,S]&=&(-\tr (S^2)+\mu^2(n-2))\Id_{\mh}+(\tr S-(n-2)\mu)S.
\label{eq:a11}
\end{eqnarray} 

Assume first that $S=k\Id_\mh$ for some $k\in\R$. In this case, \eqref{eq:a10} implies \eqref{eq:a11}, and $S$ satisfies \eqref{eq:a10} if and only if $\mu=0$ or $\mu=k$. This gives  \eqref{it:id} in the statement.

Now suppose $S$ is not a multiple of the identity and write $S=\frac{\tr S}{n-1}\Id_{\mh}+S_0$, with $S_0\neq 0$. Using \eqref{eq:a10} in \eqref{eq:a11}, one gets that \eqref{eq:a11} holds if and only if
\begin{eqnarray}
\,[A,S_0]&=&(\tr S-(n-2)\mu)S_0.
\label{eq:a13}
\end{eqnarray} 
Since $S_0\neq 0$ and $[A,S_0]=[A,S]$, this is equivalent to $[A,S]=0$ and $\tr S=(n-2)\mu$. Therefore, the system
\eqref{eq:a10}--\eqref{eq:a11} becomes
 \begin{eqnarray*}
 \tr S&=&\mu(n-2)\\
 (\tr S)^2&=&\tr(S^2)(n-2)\\
\,[A,S]&=&0.
\end{eqnarray*}
We thus get the case \eqref{it:S0} of the statement. 

Finally, notice that $S=k\Id_\mh$ satisfies $(n-2)\tr(S^2)=(\tr S)^2$ if and only if $k=0$; this implies that the cases \eqref{it:id} and \eqref{it:S0} are indeed exclusive.
\end{proof}

This theorem provides a construction method of almost abelian conformal Lie groups $(G,c)$ carrying left-invariant Weyl-Einstein structures, as we show next.

Let $(\mh,g_0)$ be an inner product vector space of dimension $n-1$ and let $A$ and $S$ be, respectively,  a skew-symmetric and  a symmetric  endomorphism of $(\mh,g_0)$ satisfying one of the two conditions in Theorem \ref{pro:aa}. 

Consider the Lie algebra $\mg$ which is the semidirect product of $\R$ and $\mh$ by $A+S$; namely, $\mg=\R b\ltimes \mh$ where $\ad_b=A+S$. Set $g$ to be the inner product on $\mg$ extending $g_0$ on $\mh$ and satisfying $g(b,\mh)=0$ and $g(b,b)=1$.

If $G$ is the simply connected Lie group corresponding to $\mg$ and $c$ is the conformal class of the left-invariant metric $g$ on $G$, then $(G,c)$ admits a Weyl-Einstein structure. Indeed, the Weyl structure $\nabla$ defined by the left-invariant Lee form $\th= \mu b$, where $\mu$ is given in the theorem depending on each case,  corresponding to the metric $g$ via \eqref{eq:weylrel} is Weyl-Einstein.

If $S$ is chosen to be a multiple of the identity the Riemannian Lie group $(G,g)$ is Einstein. The next example shows how to construct non-Einstein examples.

\begin{ex}\label{ex:S0}
Consider an inner product vector space $(\mh,g_0)$ of dimension $n-1$ and let $S_0$ be a non-trivial trace free symmetric endomorphism. Let $a\neq 0$ be a real number such that $ 0<\tr(S_0^2)=a^2$ and define $S:=a\sqrt{\frac{n-2}{n-1}}\Id+S_0$, which is also symmetric. One can easily check that $(n-2)\tr(S^2)=(\tr S)^2$.

Let $(\mg,g)$ be the metric Lie algebra built as a semidirect product of $\R$ and $\mh$ by $S$ as above. By construction, $(\mg,g)$ satisfies the second condition in Theorem \ref{pro:aa}, so the simply connected Lie group $G$ corresponding to $\mg$, together with  the conformal class $c:=[g]$, admits a Weyl-Einstein structure whose Lee form with respect to the metric $g$ is  $\th= a\sqrt{\frac{n-1}{n-2}} b$. Remarkably, $(G,g)$ is not Einstein, due to Lemma \ref{re:Sid}.
\end{ex}

\begin{re} One can easily check that if a nilpotent endomorphism $\ad_b=A+S$ satisfies one of the two conditions in Theorem \ref{pro:aa}, then $A=S=0$. Indeed, if  $A+S$ is nilpotent, then $0=\tr(A+S)=\tr S$. So if $A$ and $S$ satisfy either of the conditions in Theorem \ref{pro:aa}, then $S$ must vanish. Therefore,  $\ad_b=A$ is nilpotent and skew-symmetric, so it has to vanish as well.

This fact corroborates Proposition \ref{pro:nilWE}, stating that the only nilpotent conformal Lie groups admitting left-invariant Weyl-Einstein structures are the abelian ones. 
\end{re}

\begin{re}
Assume that  $\mg$ is an almost abelian Lie algebra which is not nilpotent. Then the codimension 1 abelian ideal $\mh$ is unique \cite[Theorem 2.4]{Mil76}. Given a metric $g$ on $\mg$, the endomorphisms $A$ and $S$ are thus determined up to sign, because the unit vector $b$ is unique up to sign.  Of course, the conditions in Theorem \ref{pro:aa} do not depend on the choice of the sign.
\end{re}

The Lee forms $\theta$ occurring in Theorem \ref{pro:aa} are of the form $\th=\mu b$, and thus $\mg'\subset\ker(\th)$. Therefore, $\th$ is a closed left-invariant 1-form on $G$, so since $G$ is simply connected, $\th={\rm d} f$ for some differentiable function $f$ on $G$. Hence,  the Weyl-Einstein structure $\nabla$ is the Levi-Civita connection of the  Riemannian metric  $h:=e^{2f}g\in c$, i.e. $\nabla=\nabla^h$.

 This observation leads to the following consequence of Theorem \ref{pro:aa}.

\begin{co}\label{co:WEisc}
Let $(G,c)$ be a simply connected almost abelian conformal Lie group. Every left-invariant Weyl-Einstein structure on $(G,c)$ is the Levi-Civita connection of an Einstein Riemannian metric in $c$.
\end{co}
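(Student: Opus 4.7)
The plan is to extract everything from Theorem \ref{pro:aa}: the conclusion in both cases is that the Lee form is a scalar multiple of $b$, where $b$ spans the $g$-orthogonal complement of the abelian ideal $\mathfrak h$. The proof will essentially formalize the observation already made in the paragraph preceding the corollary, by exhibiting an explicit conformal change of gauge that kills the Lee form.

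First, I would invoke Theorem \ref{pro:aa} to conclude that any left-invariant Weyl-Einstein structure $\nabla$ on $(G,c)$ has Lee form (with respect to the chosen left-invariant metric $g \in c$) of the form $\theta = \mu b$ for some $\mu \in \R$. Next I would observe that $\theta$ annihilates the commutator $\mg' = [\mg,\mg] \subset \mh$, because $\mh$ is an abelian ideal of codimension one and $\theta$ vanishes on $\mh$ by construction. Using the standard formula ${\rm d}\theta(x,y) = -\theta([x,y])$ for left-invariant $1$-forms, this yields ${\rm d}\theta = 0$. Since $G$ is simply connected, $\theta$ is then exact, say $\theta = {\rm d}f$ for some $f \in C^\infty(G)$.

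The key step is then to apply the conformal change of gauge: set $h := e^{2f} g \in c$. By the transformation rule for the Lee form recalled in the preliminaries, the Lee form $\theta_h$ of $\nabla$ with respect to $h$ satisfies $\theta = \theta_h + {\rm d}f$, and hence $\theta_h = 0$. Plugging $\theta_h = 0$ into the defining formula \eqref{eq:weylrel} (applied to the metric $h$) gives $\nabla = \nabla^h$, i.e. the Weyl structure is the Levi-Civita connection of $h$.

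To conclude, I would invoke Remark \ref{rem:EvsWE}(2): an exact Weyl-Einstein structure is the Levi-Civita connection of an Einstein metric in $c$, namely the metric whose Lee form vanishes. Here that metric is precisely $h$, so $(G,h)$ is Einstein and $\nabla = \nabla^h$, as claimed. There is no serious obstacle here: the nontrivial input is Theorem \ref{pro:aa}, which already pins down $\theta$ to the one-dimensional line $\R b$; the remainder is a clean conformal rescaling argument made possible by simple connectedness of $G$.
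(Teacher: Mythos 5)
Your argument is correct and follows essentially the same route as the paper: Theorem \ref{pro:aa} forces $\theta=\mu b$, hence $\mathfrak g'\subset\ker\theta$ and $\theta$ is closed, simple connectedness gives $\theta={\rm d}f$, and the conformal change $h=e^{2f}g$ kills the Lee form so that $\nabla=\nabla^h$ is the Levi-Civita connection of the Einstein metric $h\in c$. Nothing to add.
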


Notice that the converse may not hold in general since an Einstein metric $h=e^{2f} g$ with $g$ left-invariant may not satisfy that $\th:={\rm d} f$ is left-invariant, and thus the Weyl-Einstein structure would fail to be left-invariant.

We now compute the curvature of the Einstein metrics that arise from Weyl-Einstein structures on simply connected almost abelian conformal Lie groups.

\begin{pr}\label{pro:rff}
Let $(G,c)$ be a simply connected almost abelian conformal Lie group admitting a left-invariant Weyl-Einstein structure $\nabla$ and let $g$ be a left-invariant metric in $c$. Denote by $\th$ the left-invariant Lee form of $\nabla$ with respect to $g$, and assume that $\th\neq 0$. Then the Riemannian metric $h:=e^{2f}g$, where $f$ verifies $\th={\rm d} f$, is Ricci-flat. 

Moreover, denoting $A$ and $S$ the endomorphisms associated to $(\mg,g)$, one has that $h$ is flat if and only if either $S=k\Id_\mh$ for some $k\in\R$ or there is a codimension $1$ subspace $U\subset \mh$ such that $S|_U=\alpha\Id_U$ for some $\alpha\in\R$, and $S|_{U^\bot}=0$.
\end{pr}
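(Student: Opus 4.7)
The plan is to start from Corollary~\ref{co:WEisc}, which yields $\nabla = \nabla^h$ for $h := e^{2f}g \in c$. Since the Lee form of $\nabla$ with respect to $h$ vanishes, the Faraday form is zero, and \eqref{eq:EW} applied with $h$ in place of $g$ reduces to $\ric{\nabla^h} = \tfrac{1}{n}\scal{h}\,h$; in particular $h$ is Einstein, so Ricci-flatness is equivalent to scalar-flatness. To compute $\scal{h}$ I would use \eqref{eq:scalrel}: writing $\theta = \mu b^\flat$, one has $\delta^g\theta = \tr\ad_\theta = \mu\tr S$ and $\|\theta\|_g^2 = \mu^2$, while \eqref{eq:scalaa} gives $\scal{g} = -\tr(S^2) - (\tr S)^2$. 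A direct substitution in either case of Theorem~\ref{pro:aa}---$\mu = k$, $\tr S = k(n-1)$, $\tr(S^2) = k^2(n-1)$ in case (\ref{it:id}); $\mu = \tr S/(n-2)$ with $(\tr S)^2 = (n-2)\tr(S^2)$ in case (\ref{it:S0})---yields $\scal{\nabla}_g = 0$. Since $\scal{\nabla}_h = e^{-2f}\scal{\nabla}_g = 0$ and $\scal{\nabla}_h = \scal{h}$ (the Lee form of $\nabla$ with respect to $h$ being zero), this gives $\scal{h} = 0$, hence $h$ is Ricci-flat.

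For the flatness criterion I would realize $G$ as the semi-direct product $\R \ltimes_\varphi H$ with $H = \exp(\mh) \cong \R^{n-1}$ and $\varphi_y = e^{y(A+S)}$ acting on $\mh$. In coordinates $(y,\mathbf{x})$ the left-invariant metric takes the form $g = dy^2 + d\mathbf{x}^T(\varphi_y\varphi_y^T)^{-1}d\mathbf{x}$, and the hypothesis $[A,S]=0$ (present in both cases of Theorem~\ref{pro:aa}) yields the crucial simplification $\varphi_y\varphi_y^T = e^{2yS}$. Choosing an orthonormal basis $\{e_i\}_{i=1}^{n-1}$ of $\mh$ diagonalizing $S$ with $Se_i = \lambda_i e_i$ gives $g = dy^2 + \sum_{i=1}^{n-1}e^{-2y\lambda_i}dx_i^2$. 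Since $\theta = \mu\,dy$ is exact with primitive $f = \mu y$, the change of variable $t := \mu^{-1}e^{\mu y}$ brings $h = e^{2\mu y}g$ to the diagonal warped-product form
\[
h = dt^2 + \sum_{i=1}^{n-1}(\mu t)^{-\beta_i}\,dx_i^2, \qquad \beta_i := \tfrac{2(\lambda_i-\mu)}{\mu}.
\]

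A standard warped-product computation then gives sectional curvatures $K(\partial_t, E_i) = -\beta_i(\beta_i+2)/(4t^2)$ and $K(E_i, E_j) = -\beta_i\beta_j/(4t^2)$ for $i\ne j$, where $\{E_i\}$ is the induced orthonormal spatial frame. Flatness is therefore equivalent to $\beta_i \in \{0,-2\}$ for every $i$ together with $\beta_i\beta_j = 0$ whenever $i\ne j$, i.e.\ at most one $\beta_i$ is nonzero and, if so, equals $-2$. Translating via $\beta_i = 0 \iff \lambda_i = \mu$ and $\beta_i = -2 \iff \lambda_i = 0$, $h$ is flat precisely when either all $\lambda_i$ equal $\mu$ (yielding $S = \mu\Id_\mh$, case (\ref{it:id}) with $k = \mu$) or exactly $n-2$ eigenvalues of $S$ equal $\mu$ and one vanishes (the codimension-one splitting with $\alpha = \mu$, the hypothesis $[A,S]=0$ forcing $A$ to preserve the decomposition). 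The main technical obstacle is this passage to explicit coordinates: once $[A,S]=0$ has been exploited to obtain $\varphi_y\varphi_y^T = e^{2yS}$ and reduce $h$ to a diagonal warped product, the rest is a classical curvature computation.
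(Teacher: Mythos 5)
Your argument is correct, and it diverges from the paper's proof in an interesting way. For Ricci-flatness the paper substitutes \eqref{eq:ricaa}, \eqref{eq:nabth} and $\th=\mu b$ into the conformal-change formula \eqref{eq:rich} and checks directly that both the $\R b\otimes\R b$ and $\mh\otimes\mh$ blocks of $\ric{\nabla^h}$ vanish in the two cases of Theorem \ref{pro:aa}; you instead observe that $\nabla=\nabla^h$ is exact, hence $h$ is automatically Einstein, so only the single scalar identity $\scal{\cn{}}_g=0$ (via \eqref{eq:scalrel} and \eqref{eq:scalaa}) needs to be verified -- a genuinely shorter computation, and your substitutions in both cases do give zero. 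For the flatness criterion the paper stays intrinsic: it characterizes $R^h=0$ by the Kulkarni--Nomizu identity \eqref{eq:KN}, verifies $R^g=T$ with \eqref{eq:Raa} and \eqref{eq:Tkk} in the two admissible cases, and for the converse contracts with $b$ and eigenvectors of $S$ to force the eigenvalues $\mu$ and $0$. You instead realize $G$ as $\R\ltimes_{e^{y(A+S)}}\R^{n-1}$, use $[A,S]=0$ (automatic in case \eqref{it:id}, assumed in case \eqref{it:S0}) to get $\varphi_y\varphi_y^T=e^{2yS}$, so that $g$ is a diagonal multiply warped product independent of $A$, and after the substitution $t=\mu^{-1}e^{\mu y}$ read off flatness of $h$ from the explicit curvatures $-\beta_i(\beta_i+2)/(4t^2)$ and $-\beta_i\beta_j/(4t^2)$; the dictionary $\beta_i=0\iff\lambda_i=\mu$, $\beta_i=-2\iff\lambda_i=0$ then reproduces exactly the two alternatives (the sub-case $\alpha=0$ being vacuous since $\th\neq0$ forces $\mu\neq0$). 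What your route buys is an explicit isometric model of $(G,h)$ that makes the eigenvalue dichotomy transparent; what it costs is the need to justify the coordinate expression $g=dy^2+d\mathbf{x}^T(\varphi_y\varphi_y^T)^{-1}d\mathbf{x}$ for the left-invariant metric and the (true, but worth stating) fact that for such a diagonal metric the listed components exhaust the curvature tensor, so that their vanishing really is equivalent to $R^h=0$. Both of these are routine, so the proposal stands as a complete alternative proof.
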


\begin{proof}Let  $\nabla$ denote a left-invariant Weyl-Einstein structure on $(G,c)$, let $g\in c$ be left-invariant, so that $(\mg=\R b\ltimes \mh,g)$ is the corresponding metric Lie algebra. From Theorem \ref{pro:aa}, we know that  that the Lee form $\th\in \mg^*$ has the form $\th=\mu b$, where $\mu\in\R$ depends on the endomorphisms $A$ and $S$ of $(\mg,g)$. In any case, ${\rm d}\th=0$ and, $G$ being simply connected, there is a differentiable function $f$ such that $\th={\rm d} f$ and $h:=e^{2f} g$ is an Einstein metric. Then $\nabla=\nabla^h\neq \nabla^g$, because we assume $\th\neq 0$.

We compute $\ric{\nabla^h}$ by using the formula of conformal change of metrics  \cite[\S1.159]{Besse2008}, which reads
\begin{equation}\label{eq:rich}
 \ric{\nabla^h} =  \ric{\cn{g}} - (n-2)\bigl(\cn{g}\th - \th \otimes \th\bigr) + \bigl(\de^g\th - (n-2)\|\th\|_g^2\bigr)g.
\end{equation}
 Note that this coincides with the expression of $\ric{\nabla}$ given in \eqref{eq:ricrel}.
Since $\th$ is closed, we have 
$\nabla^g\th=(\nabla^g\th)^{\rm sym}$, so using  \eqref{eq:nabth} and the fact that $\th=\mu b$ we further get
\[
\cn{g}\th - \th \otimes \th=-\ad_\th^{\rm{sym}}-\mu^2b \otimes b=-\mu S-\mu^2b \otimes b, \quad 
\de^g\th=\tr \ad_\th=\mu \tr S.
\]
These equalities together with \eqref{eq:ricaa} and \eqref{eq:rich} imply
\begin{equation}
 \ric{\nabla^h}= -\tr(S^2)b\otimes b+[A,S]-\tr S\cdot S - (n-2)\bigl(-\mu S-\mu^2b \otimes b\bigr) + \bigl(\mu\tr S -\mu^2 (n-2)\bigr)\Id.
\end{equation}In particular, $\ric{\nabla^h}$ preserves both $\R b$ and $\mh$.
The part of $\ric{\nabla^h}$ on $\R b\otimes \R b$  is
\begin{equation}\label{eq:bb1}
(-\tr(S^2)+ (n-2)\mu^2+\mu \bigl(\tr S -\mu(n-2)\bigr))b\otimes b
\end{equation}
and the part on $\mh\otimes \mh$ can be written as
\begin{equation}\label{eq:AS} [A,S]+  \bigl(\tr S -\mu (n-2)\bigr)(\mu\Id_\mh-S).
\end{equation}
Recall that $A$, $S$ and $\mu$ verify one of the two conditions in Theorem \ref{pro:aa}. 

On the one hand, if $S=k\Id$ for some $k\in \R$, then one can have $\mu=0$ or $\mu=k$. However, since we assume $\mu\neq 0$, the only possibility is $\mu=k$ and $S\neq 0$. It is easy to check that in this case \eqref{eq:bb1} and \eqref{eq:AS} vanish and thus $\ric{\nabla^h}=0$.

On the other hand, if $S\neq 0$, $(\tr S)^2=\tr(S^2)(n-2)$ and $[A,S]=0$, then $\mu=\tr S/(n-2)$ and thus one has that \eqref{eq:bb1} and \eqref{eq:AS} vanish again. Therefore $h$ is Ricci-flat in both cases.

In order to prove the second part of the statement, we compute the curvature tensor $R^h$ by using the formulas relating the curvature of two conformal metrics \cite[\S1.159]{Besse2008}. From these formulas, it is easy to see that $R^h=0$ if and only if the following equation holds
\begin{equation}\label{eq:KN}
R^g=g\owedge(\nabla\th-\th\otimes\th+\frac12|\th|^2g),\end{equation}
where $R^g$ is the curvature of $(G,g)$ and $\owedge$ denotes the Kulkarni-Nomizu product (cf. \cite[\S1.110]{Besse2008}).

The curvature $R^g$ on left-invariant vector fields is given in \eqref{eq:Raa}, so we shall compute the right hand side of \eqref{eq:KN}. Using that $\th=\mu b$ is closed, 
$\nabla^g\th=(\nabla^g\th)^{\rm sym}$ and  from \eqref{eq:nabth}  we get
\begin{equation}\label{eq:Tkk}
T:=g\owedge(\nabla\th-\th\otimes\th+\frac12|\th|^2g)=g\owedge(-\mu S-\mu^2 b\otimes b+\frac12 \mu^2 g)
\end{equation}
We prove first that $R^g=T$ in the two cases in the statement, that is, when $S=k\Id_\mh$ for some $k\in\R$ and also if there is a codimension 1 subspace $U\subset \mh$ such that $S|_U=\alpha\Id_U$ for some $\alpha\in\R$, and $S|_{U^\bot}=0$.

Suppose first that  $S= k \Id$ for some $k\in \R$. Since $\th=\mu b$ defines a Weyl-Einstein structure, we know from Theorem \ref{pro:aa} that $\mu=k$, which we assume to be non-zero.  Then one can easily check that $T=-\frac12 k^2 g\owedge g$ and thus, by using \eqref{eq:Raa}, one gets that $R^g=T$. Therefore, \eqref{eq:KN} holds and thus $R^h$ is flat.

Now assume that there is a subspace $U\subset \mh$ of codimension 1 such that $S|_U=\alpha\Id_U$ for some $0\neq \alpha\in\R$ and $S|_{U^\bot}=0$; note that $(\tr S)^2=\tr(S^2)(n-2)$. According to Theorem \ref{pro:aa}, $\mu=\tr S/(n-2)=\alpha$ since $\th$ is the Lee form of a Weyl-Einstein structure. With these elements, one can explicitly compute the curvature $R^g$ using \eqref{eq:Raa} and the tensor $T$ in \eqref{eq:KN}, to show that $R^g=T$.  Hence $R^h=0$ in this case as well.

Conversely, suppose $\th=\mu b$ with $\mu\neq 0$ defines a left-invariant Weyl-Einstein structure with $R^h=0$.  
We will show that if $S\neq k\Id$ for any $k\in\R$, then there is a subspace $U$ satisfying the conditions in the statement. 

Assume that $S\neq k\Id$ for any $k\in\R$, then $S,A$ satisfy \eqref{it:S0} in Theorem \ref{pro:aa}. In particular, $[A,S]=0$. For every $v\in\mh$, \eqref{eq:Tkk}  gives
\[
T_b(v,v):=T(b,v,b,v)=-\mu g(Sv,v),
\]and using  \eqref{eq:Raa} and $[A,S]=0$, we have \[
R_b(v,v):=g(R^g(b,v)b,v)=-g(S^2 v,v).\]
Since $R^h=0$, $T=R^g$ by \eqref{eq:KN} and thus $T_b(v,v)=R_b(v,v)$ for every $v\in\mh$. In particular, if $v$ is an eigenvector of $S$ associated to the eigenvalue $\lambda$ this equality gives
\[
-\mu \lambda |v|^2=T_b(v,v)=R_b(v,v)=-\lambda^2 |v|^2.
\]
This implies that the only non-zero eigenvalue of $S$ is $\mu$. Since $S\neq 0$,  $(\tr S)^2=\tr(S^2)(n-2)$ and $\tr S=\mu(n-2)$, one must have $\dim\ker S=1$, and $S|_{(\ker S)^\perp}=\mu\Id_{(\ker S)^\perp}$ completing the proof.
\end{proof}

\begin{ex} 
Consider an inner product vector space $(\mh,g_0)$ of dimension $n-1\geq 3$ and let $S_0$ be a non-trivial trace free symmetric endomorphism such that every eigenspace of $S_0$ is  of dimension at most $n-3$.

Let $\nabla$ be the Weyl-Einstein structure on the simply connected Riemannian solvable Lie group corresponding to $(\mg=\R b\ltimes \mh,g)$ built from $S_0$ as in Example \ref{ex:S0} and let $\th$ be the left-invariant Lee form corresponding to $\nabla$ by $g$. 

By Proposition \ref{pro:rff}, the Weyl structure $\nabla$ is not flat. Thus, the Riemannian metric $h=e^{2f} g$, where $f$ satisfies ${\rm d}f=\th$, is Ricci-flat but not flat. Notice, that $h$ is not left-invariant.
\end{ex}

\section{3-dimensional solvable Riemannian Lie groups}\label{ch:3d}

In this section we apply the above results in order to 
classify simply connected solvable Riemannian Lie groups $(G,g)$ of dimension 3 carrying left-invariant Weyl-Einstein structures. This is possible because of the following simple observation:
%, by performing a simple inspection in the classification of solvable Lie algebras in low dimension \cite[\S I.4]{JA}, one can easily check that every simply connected solvable Lie group of dimension 3 is almost abelian.   

\begin{lem} Every solvable Lie algebra of dimension $3$ is almost abelian.
\end{lem}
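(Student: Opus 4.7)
The plan is to split on the dimension of the derived subalgebra $\mg' := [\mg,\mg]$, which lies in $\{0,1,2\}$ since $\mg$ is solvable of dimension $3$. In each case the goal is to exhibit an abelian ideal of codimension $1$.

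The two extreme cases are the quick ones. If $\dim\mg' = 0$, then $\mg$ is abelian and any $2$-dimensional subspace works. If $\dim\mg' = 2$, I would first rule out that $\mg'$ is non-abelian: the only non-abelian $2$-dimensional Lie algebra has a basis $(x,y)$ with $[x,y]=y$, and a short computation shows that every derivation of it has image contained in $\R y$. Thus, for any complement $\R e$ to $\mg'$ in $\mg$, $\ad_e|_{\mg'}$ lands in $\R y$, which forces $[\mg,\mg] = [\mg',\mg'] + \ad_e(\mg') \subset \R y$, contradicting $\dim\mg' = 2$. Consequently $\mg'$ is itself abelian and serves as the desired codimension-one ideal.

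The substantive case is $\dim\mg' = 1$, where I would write $\mg' = \R v$ and introduce the linear form $\lambda \in \mg^*$ defined by $[x,v] = \lambda(x)\,v$. If $\lambda = 0$, then $v$ is central and $\mg$ is two-step nilpotent, hence abelian or isomorphic to the Heisenberg algebra, and any plane containing the center is an abelian ideal of codimension $1$. If $\lambda \neq 0$, I would take $\ker\lambda$ as the candidate: it has codimension $1$, contains $\mg'$ (so it is automatically an ideal because $[\mg,\ker\lambda]\subset\mg'\subset\ker\lambda$), and the only remaining point is abelianness. To check this I pick $e_3$ with $\lambda(e_3)=1$ and a basis $\{e_1,e_2\}$ of $\ker\lambda$, expand $v = \alpha e_1 + \beta e_2$, and suppose for contradiction $[e_1,e_2] = r v$ with $r\neq 0$. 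Applying the Jacobi identity to $(e_1,e_2,e_3)$, and using $[e_i,v] = \lambda(e_i) v = 0$ for $i=1,2$, the two outer terms vanish and only the contribution $[[e_1,e_2],e_3] = r[v,e_3] = -rv$ survives, giving $-rv = 0$, whence $r=0$, a contradiction.

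I expect the main obstacle to be precisely this last abelianness check: the other branches are immediate either from inspection or from the classification of low-dimensional nilpotent Lie algebras, whereas here one has to recognise that the Jacobi identity cancels the two $\ad_{e_i}(v)$ terms and isolates only the contribution from $[e_3,v]$, which is forced to be nonzero by $\lambda(e_3)=1$.
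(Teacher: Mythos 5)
Your proof is correct, and it follows the same overall decomposition as the paper (a case split on $\dim\mg'\in\{0,1,2\}$), but the two non-trivial cases are handled by different means. For $\dim\mg'=2$, the paper simply invokes the standard fact that the derived algebra of a solvable Lie algebra is nilpotent, hence (being of dimension $\le 2$) abelian; you instead rule out a non-abelian $\mg'$ by computing the derivations of the affine algebra $[x,y]=y$ and noting their images lie in $\R y$, which is more elementary and self-contained at the cost of a short computation. For $\dim\mg'=1$, the paper works with the centralizer $\mm$ of a generator $\xi$ of $\mg'$: since $\dim\mm\ge 2$ one picks $\zeta\in\mm\setminus\mg'$ and the span of $\xi,\zeta$ is an abelian ideal, with abelianness automatic because $[\xi,\zeta]=0$; this treats your subcases $\lambda=0$ and $\lambda\ne 0$ uniformly. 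You instead take $\ker\lambda$ (which, when $\lambda\ne 0$, is the same subspace as the paper's ideal) and establish abelianness via the Jacobi identity; this is valid, but note you could bypass both the Jacobi computation and the appeal to the classification of $3$-dimensional nilpotent algebras by choosing a basis of the candidate ideal that contains $v$ itself, since $[v,w]=-\lambda(w)v=0$ for any $w$ in the centralizer of $v$ --- which is precisely the paper's trick. In short: the paper's argument is shorter and leans on a general structural theorem, while yours trades that for explicit low-dimensional computations; both are complete.
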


\begin{proof} If $\mg$ is a 3-dimensional solvable Lie algebra, its derived algebra $\mg'$ is a nilpotent ideal of dimension at most 2, so it is abelian. 

If $\dim(\mg')=2$, $\mg'$ is then a codimension 1 abelian ideal. If $\dim(\mg')=0$, $\mg$ is abelian. Finally, if $\dim(\mg')=1$, let $\xi$ be a generator of $\mg'$ and let $\mm$ be the kernel of the linear map $\mg\to\mg'$ given by $x\mapsto [x,\xi]$. Since $\dim(\mm)\ge 2$, there exists a vector $\zeta\in \mm\setminus \mg'$. Then $\xi$ and $\zeta$ span a codimension 1 abelian ideal of $\mg$.
\end{proof}

Left-invariant Riemannian metrics on 3-dimensional simply connected Lie groups were classified, up to automorphisms, by Ha and Lee \cite{HaLe}. Recall that, for simply connected Lie groups, the classes of left-invariant metrics up to Lie group automorphisms are in one-to-one correspondence with classes of metric Lie algebras $(\mg,g)$ up to Lie algebra automorphisms.

In what follows we introduce some notation and review the results in \cite{HaLe} which are of interest for us, namely those corresponding to solvable Lie algebras which are either abelian, or solvable and non-nilpotent (see Proposition \ref{pro:nilWE}). 

Let $\mg$ be a solvable Lie algebra of dimension 3. 
If $\mg$ is abelian, then every metric $g$ on $\mg$ is equivalent, up to automorphisms, to the standard metric $g_\bullet$. Now, if $\mg$ is neither nilpotent nor abelian, then $\mg$ is isomorphic to  a Lie algebra having a basis $\mathcal B=\{x,y,z\}$ whose Lie brackets satisfy one of the following:
\begin{itemize}
\item $\mg=\mathfrak{Sol}$: $[x,y]=0$, $[z,x]=x$, $[z,y]=-y$ ,
\item $\mg=\mathfrak{so}(2)\ltimes \R^2$: $[x,y]=0$, $[z,x]=-y$, $[z,y]=x,$ 
\item $\mg=\R\ltimes_{\Id} \R^2$: $[x,y]=0$, $[z,x]=x$, $[z,y]=y$,
\item $\mg=\mg_t$: $[x,y]=0$, $[z,x]=y$, $[z,y]=-tx+2y$, for some $t\in\R$.
\end{itemize}

We shall denote $\{x^*,y^*,z^*\}$ the dual basis of $\mathcal B$ and the symmetric product of two covectors $\xi,\zeta\in\mg^*$ by $\xi\odot \zeta:=\frac12 (\xi\otimes \zeta+\zeta\otimes \xi)$. 
Consider the following symmetric bilinear forms on $\mg$:
\[\begin{array}{lcl}
g_\nu&=& x^*\odot x^*+y^*\odot y^*+\nu \,z^*\odot z^*\\
g_{\mu,\nu}&=&  x^*\odot x^*+\mu\, y^*\odot y^*+\nu \,z^*\odot z^*\\
h_{\mu,\nu}&=&   x^*\odot x^*+2x^*\odot y^* +\mu \, y^*\odot y^*+\nu \,z^*\odot z^*\\
m_{\nu}&=&   x^*\odot x^*+x^*\odot y^* + y^*\odot y^*+\nu \,z^*\odot z^*
\end{array}
\] 
where $\mu,\nu$ are real parameters such that the above are indeed positive definite.

By \cite{HaLe}, every inner product $g$ on $\mg$ is, up to an automorphism, one of the following:
\begin{itemize}
\item if $\mg=\R\ltimes_{\Id} \R^2$,  $g=g_{\nu}$ with $ 0<\nu$.
\item if $\mg=\mathfrak{so}(2)\ltimes \R^2$,  $g=g_{\mu,\nu}$ with $ 0<\nu$ and $0<\mu\leq 1$.
\item if $\mg=\mg_t$ where $t>1$, $g=h_{\mu,\nu}$ with $0<\nu$ and $0<\mu 	\leq t  $.
\item if $\mg=\mg_0$,  $g=g_{\mu,\nu}$ or $g=m_{\nu}$ with $0<\mu ,\nu$.
\end{itemize}
One should notice that in \cite{HaLe}, the metrics on $\mathfrak{Sol}$ and on $\mg_t$ for any value of $t\in\R$  are classified. However, we do not give the full classification for these cases since they will not appear in our results.

\begin{theorem}\label{th:cl3}
Let $(G,g)$ be a solvable and simply connected Riemannian Lie group of dimension $3$ with Lie algebra $\mg$. Then $(G,[g])$ admits a left-invariant Weyl-Einstein structure if and only if $(\mg,g)$ is one of the following metric Lie algebras
\begin{enumerate}
\item $(\R^3,g_\bullet)$.
\item $(\R\ltimes_{\Id}\R^2,g_{\nu})$, for any $\nu>0$.
\item $(\mathfrak{so}(2)\ltimes \R^2,g_{1,\nu})$, for any $\nu>0$.
\item $(\mg_t,h_{t,\nu})$ for $t>1$ and for any $\nu>0$.
\item $(\mg_0,m_\nu)$ for  any $\nu>0$.
\end{enumerate}
\end{theorem}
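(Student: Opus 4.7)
The plan is to combine the almost-abelian classification in Theorem~\ref{pro:aa} with the Ha--Lee classification of left-invariant metrics on $3$-dimensional simply connected Lie groups. By the preceding lemma, every $3$-dimensional solvable Lie algebra $\mg$ is almost abelian. If $\mg$ is abelian, then $(\R^3,g_\bullet)$ is flat, so the trivial Weyl structure $\theta=0$ is Weyl-Einstein, yielding case~(1); if $\mg$ is nilpotent non-abelian (the Heisenberg algebra), Proposition~\ref{pro:nilWE} rules out any left-invariant Weyl-Einstein structure. So I may assume $\mg$ is solvable non-nilpotent, hence isomorphic to $\mathfrak{Sol}$, $\mathfrak{so}(2)\ltimes\R^2$, $\R\ltimes_{\Id}\R^2$, or $\mg_t$ for some $t\in\R$.

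Since $n=3$, the ideal $\mh$ is $2$-dimensional and $n-2=1$, so the alternatives of Theorem~\ref{pro:aa} collapse to
\begin{equation*}
\text{(i) } S=k\,\Id_\mh \text{ for some } k\in\R,\qquad \text{(ii) } S\ne 0,\ (\tr S)^2=\tr(S^2),\ [A,S]=0.
\end{equation*}
For a symmetric $2\times 2$ matrix with eigenvalues $s_1,s_2$, the relation $(s_1+s_2)^2=s_1^2+s_2^2$ amounts to $s_1s_2=0$, so (ii) simply requires $S$ to be non-zero with a $1$-dimensional kernel and to commute with $A$. For each Lie algebra above and each Ha--Lee normal-form metric, the strategy is to pick the unit vector $b\in\mh^\perp$, orthonormalize $\mh$, and read off $S$ and $A$ from the matrix of $\ad_b|_\mh$; (i) and (ii) then become explicit polynomial constraints in the metric parameters.

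Carrying this out case by case: for $\R\ltimes_{\Id}\R^2$ with $g_\nu$ one finds $\ad_b=(1/\sqrt\nu)\Id_\mh$, so (i) holds for every $\nu>0$, yielding case~(2). For $\mathfrak{so}(2)\ltimes\R^2$ with $g_{\mu,\nu}$, $S$ is symmetric trace-free with off-diagonal entry proportional to $1-\mu$, so (i) forces $\mu=1$ (whence $S=0$), while (ii) fails identically since a non-zero trace-free symmetric $2\times 2$ matrix has $(\tr S)^2=0<\tr(S^2)$; this yields case~(3). For $\mg_t$, $t>1$, with $h_{\mu,\nu}$, $S$ has both diagonal entries equal to $1/\sqrt\nu$ and off-diagonal $(\mu-t)/(2\sqrt{\nu(\mu-1)})$, so (i) forces $\mu=t$ (case~(4)); condition (ii) leads to $(\mu-t)^2=4(\mu-1)$, whose unique admissible root in $(1,t]$ makes the skew part $A$ non-zero and violates $[A,S]=0$ by a direct matrix computation. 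For $\mg_0$, $g_{\mu,\nu}$ gives $(\tr S)^2-\tr(S^2)=-\mu/(2\nu)<0$ so neither alternative holds, while $m_\nu$ makes $\ad_b|_\mh$ already symmetric with eigenvalues $0$ and $2/\sqrt\nu$, so $A=0$, $S$ has a $1$-dimensional kernel, and (ii) holds, yielding case~(5). A parallel computation on $\mathfrak{Sol}$ produces no solution. The main obstacle is the $\mg_t$ case, where the apparent one-parameter family of solutions of $(\tr S)^2=\tr(S^2)$ must be excluded via the commutation constraint $[A,S]=0$, which reduces to a rank-one matrix identity that fails throughout the admissible range.
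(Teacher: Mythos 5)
Your computations on the families the paper lists are correct: for $\R\ltimes_{\Id}\R^2$ one indeed gets $S=\nu^{-1/2}\Id_\mh$; for $\mathfrak{so}(2)\ltimes\R^2$ the off-diagonal entry of $S$ is $(1-\mu)/(2\sqrt{\mu\nu})$; for $\mg_t$, $t>1$, the symmetric part has diagonal $\nu^{-1/2}$ and off-diagonal $(\mu-t)/(2\sqrt{\nu(\mu-1)})$ while the skew part has coefficient $(\mu+t-2)/(2\sqrt{\nu(\mu-1)})\neq 0$, so alternative (ii) fails there; and the two normal forms on $\mg_0$ behave exactly as you say. One small slip: the root $\mu=(\sqrt t-1)^2$ of $(\mu-t)^2=4(\mu-1)$ is admissible (i.e.\ $\mu>1$) only for $t>4$; for $1<t\le 4$ there is no admissible root at all — harmless, since in either case (ii) fails.

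The genuine gap is that your case analysis is incomplete. You reduce to $\mg$ isomorphic to $\mathfrak{Sol}$, $\mathfrak{so}(2)\ltimes\R^2$, $\R\ltimes_{\Id}\R^2$ or $\mg_t$ with $t\in\R$, but you only treat $\mg_t$ for $t>1$ and $t=0$: the algebras $\mg_t$ with $t<0$, $0<t<1$ or $t=1$ are never addressed, and $\mathfrak{Sol}$ is dismissed with an unargued ``parallel computation''. For the ``only if'' direction these must be excluded for \emph{every} left-invariant metric, and the Ha--Lee normal forms you would need are precisely the ones the paper declines to reproduce, so as written your argument has nothing to compute with there. The omission is repairable without normal forms: for these algebras the codimension-one abelian ideal $\mh$ is unique, any unit $b\perp\mh$ satisfies $\ad_b|_\mh=c\,\ad_z|_\mh$ with $c>0$, and trace and determinant of $\ad_b|_\mh$ are basis-independent. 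Alternative (i) of Theorem \ref{pro:aa} makes $\ad_b|_\mh=k\Id_\mh+A$ with eigenvalues $k\pm ia$ and diagonalizable over $\C$, which is impossible for $\mathfrak{Sol}$ (eigenvalues $\pm c\neq 0$) and for $\mg_t$ with $t\le 1$ (real distinct eigenvalues for $t<1$, a nontrivial Jordan block for $t=1$); alternative (ii) forces $A=0$ (a nonscalar symmetric $S$ commuting with $A$ in dimension $2$) and $\det(\ad_b|_\mh)=\det S=0$, i.e.\ $t=0$, and is impossible for $\mathfrak{Sol}$ since $\tr S=0$ would give $S=0$. Finally, note that yours is exactly the route the paper mentions as ``a possible proof'' and then avoids: its actual proof goes through Lemma \ref{lem:buv}, constructing an orthonormal basis directly from the two alternatives of Theorem \ref{pro:aa} and then changing basis into the Ha--Lee normal forms, which is why it never needs the normal forms your argument silently skips.
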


A possible proof of this theorem would be to go through the list of metric solvable Lie algebras of dimension 3 given by Ha and Lee  \cite{HaLe}, to compute the endomorphisms $A$ and $S$ for each case, and then to apply Theorem \ref{pro:aa}. However, we proceed by a constructive approach which relies on the following preliminary result.

\begin{lem}\label{lem:buv} Let $(G,g)$ be a solvable and simply connected Riemannian Lie group of dimension $3$ with Lie algebra $\mg$. Then $(G,[g])$ admits a left-invariant Weyl-Einstein structure if and only if $\mg$ admits an orthonormal basis $\{b,u,v\}$ whose Lie brackets satisfy one of the following relations
\begin{enumerate}
\item \label{it:adb} there are $k,l\in\R$ such that $[b,u]=ku-lv$, $[b,v]=lu-kv$ and $[u,v]=0$;
\item \label{it:dir} there is $0\neq \alpha\in\R$ such that  $[b,u]=\alpha u$, $[b,v]=[u,v]=0$.
\end{enumerate}
\end{lem}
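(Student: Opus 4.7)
The plan is to reduce the problem to the almost abelian setting and then invoke Theorem~\ref{pro:aa}. By the preceding lemma, every $3$-dimensional solvable Lie algebra is almost abelian, so we may decompose $\mg=\R b\oplus\mh$ with $\mh$ a $2$-dimensional abelian ideal and $b$ a unit vector in $\mh^\perp$. Let $A$ and $S$ denote the skew-symmetric and symmetric parts of $\ad_b|_\mh$ with respect to $g|_\mh$. By Theorem~\ref{pro:aa}, a left-invariant Weyl-Einstein structure on $(G,[g])$ exists if and only if the pair $(A,S)$ falls into one of the two alternatives of that theorem, so the remaining task is to translate each alternative into an explicit bracket description via a well-chosen orthonormal basis of $\mh$.

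For alternative (1) of Theorem~\ref{pro:aa}, namely $S=k\Id_\mh$ for some $k\in\R$ with Lee form $\theta=0$ or $\theta=kb$, I would exploit that $A$ is a skew-symmetric operator on the $2$-dimensional Euclidean space $(\mh,g|_\mh)$: in any orthonormal basis $\{u,v\}$ of $\mh$, $A$ takes the canonical form with a single parameter $l\in\R$. Writing $\ad_b|_\mh=k\Id_\mh+A$ and using that $\mh$ is abelian then directly yields bracket relations of the shape in case~(\ref{it:adb}) of the lemma. For alternative (2) of Theorem~\ref{pro:aa}, with $n=3$ the relation $(\tr S)^2=\tr(S^2)$ becomes $(\lambda_1+\lambda_2)^2=\lambda_1^2+\lambda_2^2$ in the eigenvalues of $S$, which forces $\lambda_1\lambda_2=0$; since $S\neq 0$ one eigenvalue is $0$ and the other some $\alpha\neq 0$. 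The hypothesis $[A,S]=0$ then forces $A=0$, which is the one slightly delicate point: on a $2$-dimensional Euclidean space a non-zero skew-symmetric operator has no real eigenline, hence cannot preserve the two distinct one-dimensional eigenspaces of $S$. Taking $u$ a unit $\alpha$-eigenvector of $S$ and $v$ a unit vector in $\ker S$ (automatically orthogonal to $u$ by symmetry of $S$) then produces precisely the brackets of case~(\ref{it:dir}).

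The converse is a direct verification. Given brackets as in case~(\ref{it:adb}), one reads off that the symmetric part of $\ad_b|_\mh$ equals $k\Id_\mh$, so alternative (1) of Theorem~\ref{pro:aa} holds and $\theta=kb$ furnishes a left-invariant Weyl-Einstein structure. Given brackets as in case~(\ref{it:dir}), the operator $\ad_b|_\mh$ is already symmetric with eigenvalues $\alpha,0$, so $A=0$, $S=\ad_b|_\mh$ satisfies $(\tr S)^2=\alpha^2=\tr(S^2)$, and $[A,S]=0$, which verifies alternative (2) of Theorem~\ref{pro:aa} with Lee form $\theta=\alpha b$. The only non-bookkeeping step throughout is the two-dimensional commutation argument forcing $A=0$ in alternative (2); everything else is a routine translation of the algebraic hypotheses of Theorem~\ref{pro:aa} into bracket form via an orthonormal basis adapted to $S$.
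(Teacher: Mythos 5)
Your proposal is correct and follows essentially the same route as the paper's proof: reduce to the almost abelian decomposition $\mg=\R b\oplus\mh$, apply Theorem~\ref{pro:aa} with $n=3$, note that $[A,S]=0$ forces $A=0$ when $S$ has two distinct eigenvalues on the $2$-dimensional space $\mh$, and read off the brackets in an adapted orthonormal basis. One aside: in alternative (1) your computation gives $[b,v]=lu+kv$, which agrees with the matrix $\left(\begin{smallmatrix}k&l\\-l&k\end{smallmatrix}\right)$ used in the paper's proof; the sign $[b,v]=lu-kv$ in the lemma's statement is evidently a misprint, so no fault of your argument.
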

\begin{proof} Since  $\mg$ is solvable of dimension 3,  it is almost abelian. So let $\mh$ be an abelian ideal of dimension 2 and let $b$ be a unit norm vector spanning $\mh^\bot$. Denote $S$ and $A$ the symmetric and skew-symmetric parts of $\ad_b|_{\mh}$, respectively.

According to Theorem \ref{pro:aa} and Corollary \ref{co:WEisc}, $G$ admits an Einstein metric conformal to $g$ if and only if either $S=k \Id_\mh$ for some $k\in \R$, or $(\tr S)^2=\tr(S^2)$ and $[A,S]=0$, with $S\neq 0$.

Assume  $S=k \Id_\mh$ for some $k\in \R$ and let $\{u,v\}$ be an orthonormal basis of $\mh$. It is easy to check that the matrix of $\ad_b$ in this basis has the form  
\begin{equation}\label{eq:adbuv}
[\ad_b]_{\{u,v\}}=\left(\begin{matrix}k&l\\-l&k\end{matrix}\right),
\end{equation}
for some $l\in\R$, thus giving \eqref{it:adb}.

Assume now that $(\tr S)^2=\tr(S^2)$ and $[A,S]=0$ with $S\neq 0$. Since $S$ is not a multiple of the identity and $\dim \mh=2$, $[A,S]=0$ implies $A=0$. Let $\{u,v\}$ be an orthonormal basis of $\mh$ of eigenvectors of $S$ and let $\alpha\neq \beta$ be the respective eigenvalues. The condition $(\tr S)^2=\tr(S^2)$ implies $\alpha\beta=0$; without loss of generality, we may assume $\beta=0$. Hence, the Lie bracket satisfies $[b,u]=\alpha u$, $[b,v]=[u,v]=0$. 
\end{proof}

\begin{proof}[Proof of Theorem \ref{th:cl3}]
Assume that $(G,[g])$ admits a left-invariant Weyl-Einstein structure and let $\{b,u,v\}$ be the orthonormal basis of $(\mg,g)$ satisfying one of the conditions in Lemma \ref{lem:buv}. We shall make a change of this basis to show that $(\mg,g)$ is indeed one of the metric Lie algebras listed in the statement.

Suppose that we are in case \eqref{it:adb} of Lemma \ref{lem:buv}  and there are $k,l\in\R$ such that \eqref{eq:adbuv} holds. 
If $k=l=0$, then $\mg$ is abelian and thus $g$ is isometrically isomorphic to $(\mg,g_\bullet)$. In the case $l=0$, and $k\neq 0$, consider the basis 
\[
z:=\frac1{k} b,\quad x:=u,\quad y:=v.
\]
It is straightforward to check that $\{x,y,z\}$ verifies the Lie bracket relations of $\R\ltimes_{\Id}\R^2$ and the metric $g$ in this basis takes the form $g_\nu$ with $\nu=k^{-2}$. To the contrary, when $k=0$ and $l\neq 0$, the basis 
\[
z:=\frac1{l} b,\quad x:=u,\quad y:=v.
\] verifies the Lie bracket relations of $\mathfrak{so}(2)\ltimes \R^2$ and the metric $g$ in this basis takes the form $g_{1,\frac1{l^2}}$.

Now assume that $k,l\neq 0$, and consider the linearly independent vectors
\[
z:=\frac1{k} b,\quad x:=u,\quad y:=u-\frac{l}{k}v.
\]
Then, 
\[
[\ad_z]_{\{x,y\}}=\left(\begin{matrix}0&-(1+\frac{l^2}{k^2})\\1&2\end{matrix}\right),\qquad [g]_{\{x,y,z\}}=\left(\begin{matrix}1&1&0\\1&1+\frac{l^2}{k^2}&0\\0&0&\frac1{k^2}\end{matrix}\right),
\]
so we get that $\mg=\mg_t$ with $t:=1+\frac{l^2}{k^2}>1$ and the metric $g=h_{t,\frac1{k^2}}$.

Assume now that $\mg$ admits a $g$-orthonormal basis $\{b,u,v\}$ satisfying \eqref{it:dir} of Lemma \ref{lem:buv}, for some $\alpha\neq 0$. Consider the basis
\[
z:=\frac{2}{\alpha} b,\quad x:=-\frac12 u -\frac{\sqrt3}2 v,\quad y:=-u,
\] for which one has
\[
[\ad_z]_{\{x,y\}}=\left(\begin{matrix}0&0\\1&2\end{matrix}\right),\qquad [g]_{\{x,y,z\}}=\left(\begin{matrix}1&\frac12&0\\\frac12&1&0\\0&0&\frac4{\alpha^2}\end{matrix}\right).
\]
It is easy to check that $\mg=\mg_0$ with metric $g=g_{\frac4{\alpha^2}}$, so we get the only remaining case.

Conversely, if $(\mg,g)$ is one of the metric Lie algebras in the statement, reversing the changes of bases above and using Lemma \ref{lem:L3}, one can easily check that $(G,[g])$ admits a left-invariant Weyl-Einstein structure.
\end{proof}

\bibliographystyle{plain}
\bibliography{biblio}

\end{document}